\newtheorem{thm}{Theorem}[section]
\newtheorem{lem}[thm]{Lemma}
\newtheorem{prop}[thm]{Proposition}
\newtheorem{cor}[thm]{Corollary}
\theoremstyle{definition}
\newtheorem{defn}[thm]{Definition}
\newtheorem{rem}[thm]{Remark}
\newtheorem{quest}[thm]{Question}
\newcommand{\R}{\ensuremath{\mathbb{R}}}
\newcommand{\dlat}{\mathrm{d}}
\DeclareMathOperator{\cl}{cl}
\DeclareMathOperator{\K}{\mathcal{K}}
\DeclareMathOperator{\inr}{r}
\DeclareMathOperator{\vol}{vol}
\def\W{\mathrm{W}}
\DeclareMathOperator{\U}{\mathcal{U}}
\def\Sa{\mathrm{S}}
\def\s{\mathbb{S}}
\def\I{\mathrm{I}}
\def\r{\mathcal{R}}
\def\abs#1{\left| #1\right|}
\numberwithin{equation}{section}
\begin{document}

\title{On the monotonicity of the isoperimetric quotient for parallel bodies}
\author{Christian Richter}
\address{Institute of Mathematics, Friedrich Schiller University, 07737 Jena, Germany}
\email{christian.richter@uni-jena.de}

\author{Eugenia Saor\'\i n G\'omez}
\address{ALTA institute for Algebra, Geometry, Topology and their
Applications, University of Bremen, 28334 Bremen, Germany}
\email{esaoring@uni-bremen.de}

\thanks{The second author is partially supported by MICINN/FEDER
project PGC2018-097046-B-I00 and Fundaci\'on S\'eneca project
19901/GERM/15}

\date{\today}

\begin{abstract}
The isoperimetric quotient of the whole family of inner and outer parallel bodies of a convex body is shown to be decreasing in the parameter of definition of parallel bodies, along with a characterization of those convex bodies for which that quotient happens to be constant on some interval within its domain. 
This is obtained relative to arbitrary gauge bodies, having the classical Euclidean setting as a particular case.
Similar results are established for different families of Wulff shapes that are closely related to parallel bodies. These give rise to solutions of isoperimetric-type problems. Furthermore, new results on the monotonicity of quotients of other quermassintegrals different from surface area and volume, for the family of parallel bodies, are obtained.
\end{abstract}

\subjclass[2010]{Primary 52A40; Secondary 52A20, 52A21, 52A38, 52A39, 52B60}

\keywords{Parallel body, tangential body, volume, surface area, quermassintegral,  isoperimetric quotient, isoperimetric inequality, gauge body, Wulff shape}

\maketitle

\section{Introduction}

Inner parallel bodies of convex bodies have been object of recent studies with different flavors \cite{Domokos, GK, HCS, HCSq, HW, Larson, RV}. Classical references on inner parallel bodies (e.g.\ \cite{Bol43, Di, Di49, Ha55, Ha57, SYphd}) along with their role in the proofs of fundamental results in the theory of convex bodies make inner parallel bodies an essential object within classical convex geometry \cite[Section 7.5]{Sch}.
Inner parallel bodies and their properties were thoroughly studied by Bol \cite{Bol43}, Dinghas \cite{Di} (see also \cite{Ha55, Ha57}) and later by Sangwine-Yager \cite{SYphd}.

Let $\K^n$ denote the family of convex bodies in $\R^n$, i.e., of nonempty compact convex
subsets of the Euclidean space $\R^n$, and let $\K^n_n$ be its subfamily of convex bodies with nonempty interior. Let $B_n$ be the $n$-dimensional unit ball and $\s^{n-1}$ the corresponding unit sphere. The volume of a convex body $K\subseteq\R^n$,
i.e., its $n$-dimensional Lebesgue measure, is denoted by $\vol(K)$. The measure of its boundary, i.e., its surface area or $(n-1)$-dimensional Hausdorff measure, is denoted by $\Sa(K)$.

For $K\in\K_n^n$, the classical \textit{isoperimetric quotient} is the ratio 
\begin{equation}\label{e:class isop quot}
\I(K)=\frac{\Sa(K)^{n}}{\vol(K)^{n-1}}.
\end{equation}

Let $K,E\in\K^n$. The {\em inradius $\inr(K;E)$ of $K$ relative to $E$} is the largest possible factor of a homothety mapping $E$ into $K$, i.e., 
\[
\inr(K;E)=\sup\{r\geq 0:\text{ there is } x\in\R^n\text{ with } x+r\,E\subseteq K\}.
\]

For $-\inr(K;E)\leq\lambda\leq 0$, the {\em inner parallel body $K_\lambda$ of $K$ relative to $E$ at distance
$\abs{\lambda}$} is the Minkowski difference of $K$ and $\abs{\lambda}E$, i.e.,
\[
K_{\lambda}=K\sim\abs{\lambda}E=\{x\in\R^n:\abs{\lambda} E+x\subseteq K\}\in\K^n.
\] 
If $E=B_n$, then $K_{-\inr(K;B_n)}$ is the set of
incenters of $K$. For any $E\in\K^n$, the set $K_{-\inr(K;E)}$ has dimension strictly less than $n$ (see \cite[p.~59]{BF}). The inner parallel sets complement the family of outer parallel sets $K_\lambda$, $\lambda \ge 0$, relative to $E$ at distance $\lambda$, that are defined as the Minkowski sums $K_\lambda=K+\lambda E$.
 
We prove that $\I(K)\geq \I(K_\lambda)$ for $- \inr(K;E) < \lambda < 0$ by investigating the behaviour of the isoperimetric quotient as a function of the real parameter $\lambda$. Our main result in this direction is the following (see Corollary~\ref{c: class isop quot}), extending the well-known result for outer parallel bodies \cite[Remark 4.4]{HCSq} to inner ones.

\begin{thm}
Let $K\in\K_n^n$. Then the isoperimetric quotient $\I(K_\lambda)$ is monotonically decreasing for $\lambda \in (-\inr(K;B_n),\infty)$.
\end{thm}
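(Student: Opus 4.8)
The plan is to deduce the statement from two properties of the single scalar function $v(\lambda):=\vol(K_\lambda)$ on $(-r,\infty)$, where $r:=\inr(K;B_n)$: (P1) $v^{1/n}$ is concave, and (P2) $v$ is of class $C^1$ with $v'(\lambda)=\Sa(K_\lambda)$. Granting these, note that $K_\lambda\in\K^n_n$ for $\lambda\in(-r,\infty)$, so $v>0$ there and $g:=v^{1/n}$ is a $C^1$ concave function; hence its derivative $g'$ is non-increasing. But
\[
g'(\lambda)=\tfrac1n\,v(\lambda)^{1/n-1}v'(\lambda)=\tfrac1n\cdot\frac{\Sa(K_\lambda)}{\vol(K_\lambda)^{(n-1)/n}}=\tfrac1n\,\I(K_\lambda)^{1/n},
\]
so $\I(K_\lambda)^{1/n}$, and therefore $\I(K_\lambda)$, is non-increasing on $(-r,\infty)$. (Here ``monotonically decreasing'' is meant in the non-strict sense; the intervals on which $\I(K_\lambda)$ stays constant are those isolated by the accompanying characterization of the cases of equality.)

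Property (P1) I would obtain from the inclusion
\[
\tfrac12 K_{\lambda_1}+\tfrac12 K_{\lambda_2}\ \subseteq\ K_{(\lambda_1+\lambda_2)/2}\qquad\text{for all }\lambda_1,\lambda_2\in(-r,\infty),
\]
which is a routine consequence of the elementary rules for Minkowski addition and subtraction (for $\lambda_1,\lambda_2\le 0$ it reduces to $(K\sim\abs{\lambda_1}B_n)+(K\sim\abs{\lambda_2}B_n)\subseteq 2K\sim(\abs{\lambda_1}+\abs{\lambda_2})B_n=2\,K_{(\lambda_1+\lambda_2)/2}$, and the mixed case follows by combining this with $(K\sim aB_n)+bB_n\subseteq K\sim(a-b)B_n$ for $0\le b\le a$). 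Applying the Brunn--Minkowski inequality to this inclusion yields the midpoint concavity $v((\lambda_1+\lambda_2)/2)^{1/n}\ge\tfrac12 v(\lambda_1)^{1/n}+\tfrac12 v(\lambda_2)^{1/n}$; since $\lambda\mapsto K_\lambda$ is Hausdorff-continuous on $(-r,\infty)$ and $\vol$ is continuous on $\K^n$, the function $v^{1/n}$ is continuous, and midpoint concavity together with continuity gives concavity.

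The point that I expect to require the most care is (P2), and it is here that the Euclidean setting $E=B_n$ is genuinely exploited — for a general gauge body $v$ need not be differentiable, which is why the full theorem in the body of the paper is more involved. For $\lambda\ge 0$, (P2) is just the classical Steiner relation $\frac{d}{d\lambda}\vol(K+\lambda B_n)=\Sa(K+\lambda B_n)$. For $\lambda<0$ I would argue via the function $\phi(x)=\operatorname{dist}(x,\partial K)$ on $\operatorname{int}K$: it is $1$-Lipschitz and concave, hence a.e.\ differentiable, and wherever it is differentiable one has $|\nabla\phi|=1$ (if $\phi$ is differentiable at $x$ with nearest boundary point $y$, then $\phi-\abs{\,\cdot\,-y}$ has a local maximum at $x$, which forces $\nabla\phi(x)=(x-y)/\abs{x-y}$); moreover $\{\phi\ge s\}=K_{-s}$ and $\{\phi=s\}=\partial K_{-s}$ for $0<s<r$. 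The coarea formula then gives $\vol(K_{-t})=\int_t^r\mathcal H^{n-1}(\partial K_{-s})\,ds=\int_t^r\Sa(K_{-s})\,ds$, and the continuity of $s\mapsto\Sa(K_{-s})$ (again from Hausdorff-continuity of $s\mapsto K_{-s}$ and continuity of $\Sa$ on $\K^n$) shows that $t\mapsto\vol(K_{-t})$ is $C^1$ with derivative $-\Sa(K_{-t})$; the one-sided derivatives of $v$ at $0$ agree (both equal $\Sa(K)$), so $v\in C^1(-r,\infty)$ with $v'=\Sa(K_{\,\cdot})$ throughout. (Alternatively, (P2) can be taken from the classical theory of inner parallel bodies — Bol, Dinghas, Sangwine-Yager — or recovered by specialising to $E=B_n$ the relative results established earlier in the paper.) Finally, it is worth noting that in this scheme the Brunn--Minkowski inequality occupies exactly the position held by Minkowski's second inequality $W_1^2\ge W_0W_2$ in the proof of the corresponding statement for outer parallel bodies.
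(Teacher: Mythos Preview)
Your proof is correct and follows essentially the same route as the paper: concavity of $\vol(K_\lambda)^{1/n}$ (from concavity of the family $(K_\lambda)$ together with Brunn--Minkowski) plus the differentiability relation $\frac{\dlat}{\dlat\lambda}\vol(K_\lambda)=\Sa(K_\lambda)$ identifies $\I(\lambda)^{1/n}$, up to the factor $n$, with the derivative of a concave $C^1$ function, hence non-increasing. One correction to your aside: the paper's Proposition~\ref{p: diff vol} (citing Bol and Matheron) gives $\frac{\dlat}{\dlat\lambda}\vol(K_\lambda)=n\W_1(K_\lambda;E)=\Sa(K_\lambda;E)$ for \emph{every} gauge $E\in\K^n_n$, so the relative theorem is proved by exactly the same argument and is not more involved than the Euclidean case---your coarea argument is a valid alternative proof of this proposition in the special case $E=B_n$, but the general fact is available and is what the paper uses.
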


We will also analyze the situation where the function $\lambda \mapsto\I\left(K_{\lambda}\right)$ is constant in some subinterval of $(-\inr(K;B_n), \infty)$, characterizing the convex bodies $K\in\K^n_n$ for which this behaviour occurs.

We notice that the surface area is implicitly defined with respect, or relative, to the Euclidean unit ball, see Section \ref{s:background} for details. Indeed, the two magnitudes, volume and surface area, involved in the isoperimetric quotient are particular cases of the so-called quermassintegrals of a convex body, and can be defined relative to a fixed body $E \in \mathcal{K}_n^n$, for which we refer the reader to Section~\ref{s:background} and the references therein.

In this note, for a fixed reference body $E\in\K^n_n$ - usually called {\em gauge}, we investigate the behaviour of the relative isoperimetric quotient function $\frac{\Sa\left(K_{\lambda};E\right)^n}{\vol\left(K_{\lambda}\right)^{n-1}}$ (see Section \ref{s:background} for details) of the family of parallel bodies of the convex body $K\in\K_n^n$ with respect to $E$. 
In \cite{HCSq} (cf.\ \cite{Domokos}) the authors prove that, under certain boundary restrictions of the involved convex bodies or under certain differentiability conditions of the involved magnitudes, respectively, the relative isoperimetric quotient $\frac{\Sa\left(K_{\lambda};E\right)^n}{\vol\left(K_{\lambda}\right)^{n-1}}$ is monotonically decreasing for $\lambda \in (-\inr(K;E),\infty)$.  For $\lambda\geq 0$, i.e., for outer parallel bodies, this is a direct consequence of the relative Steiner formula \eqref{e:steiner_rel} \cite[Remark~4.4]{HCSq}. We point out that in the planar case $K,E \in \mathcal{K}_2^2$ the monotonicity in the whole domain $(-\inr(K;E),\infty)$ seems to be treated as folklore, and to the best of our knowledge, there is no precise reference containing this result.

Here we prove that no assumption on the convex bodies $K,E\in\K^n_n$ is necessary to have monotonicity on the total domain $(-\inr(K;E),\infty)$. Moreover, we characterize all convex bodies for which the quotient is constant in some interval of its domain (see Theorem~\ref{thm:overline{I}}).

In the above result, the family $(K_\lambda)_{-\inr(K;E) \le \lambda \le 0}$ of inner parallel bodies is {\em extended} by the family $(K_\lambda)_{\lambda \ge 0}$ of outer parallel bodies. In Section~\ref{sec:  Wulff} we introduce other natural extensions of $(K_\lambda)_{-\inr(K;E) \le \lambda \le 0}$ to parameters $\lambda \ge 0$, based on particular Wulff shapes. Also the isoperimetric quotients of these families are monotonically decreasing (see Theorem~\ref{thm:  Wulff}). This allows us to characterize isoperimetrically optimal bodies $K$ with prescribed outer normals, again relative to a gauge body $E$ (see Section~\ref{sec:  Wulff}).

Finally, we investigate the behaviour of further quotients of magnitudes for parallel bodies intimately connected to the isoperimetric quotient, both for the classical and relative cases
(see Section~\ref{sec:  other qmi}). This continues work of \cite{HCSq}.


\section{Background}\label{s:background}

We write $K_{\lambda}$ to denote the inner and outer parallel bodies
of $K\in\K^n$ relative to the gauge body $E\in\K^n_n$,
\[
K_{\lambda}=\left\{\begin{array}{ll}
K\sim\abs{\lambda}E & \quad\text{ for }-\inr(K;E)\leq\lambda\leq 0,\\[1mm]
K+\lambda E & \quad\text{ for }\;0\leq\lambda<\infty.
\end{array}\right.
\]

For $K\in\K^n$ and
$u\in\R^n$, $h_K(u)=\sup\bigl\{\langle x,u\rangle:x\in
K\bigr\}$ denotes the {\em support function} of $K\in\K^n$, where $\langle \cdot,\cdot \rangle$ stands for the standard Euclidean scalar product in $\R^n$ (see e.g.\ \cite[Section 1.7]{Sch}). For $u \in\R^n\setminus\{0\}$ and $\alpha \in \R$, we define the halfspace $H^-_{u,\alpha}=\{x \in \R: \langle x,u\rangle \le \alpha\}$ \cite[p.\ xx]{Sch}. Then the above parallel bodies $K_\lambda$ can be equivalently defined as 
\begin{equation}\label{eq:  Kl intersection}
K_{\lambda}=\bigcap_{u \in \s^{n-1}} H^-_{u,h_K(u)+\lambda h_E(u)}
\end{equation}
for all $\lambda \in [-\inr(K;E),\infty)$ \cite[Lemma~4.1]{SYphd}, \cite[formula (3.19) and Theorem 1.7.5(a)]{Sch}.

The so-called {\em relative Steiner formula} states that the volume of the
outer parallel body $K+\lambda E$ is a polynomial of degree $n$ in
$\lambda\geq 0$,
\begin{equation}\label{e:steiner_rel}
\vol(K+\lambda E)=\sum_{i=0}^n \binom{n}{i}\W_i(K;E)\lambda^i.
\end{equation}
The coefficients $\W_i(K;E)$ are called the {\em relative
quermassintegrals} of $K$, and they are just a special case of the more
general {\em mixed volumes}, for which we refer to \cite[Section~5.1]{Sch} and \cite[Sections 6.2, 6.3]{Grub} (where \cite{Sch} uses the notation $V_{(i)}(K,E)$ for $W_i(K;E)$). In particular, we have $\W_0(K;E)= \vol(K)$ and
$\W_n(K;E)=\vol(E)$. 

Applying \eqref{e:steiner_rel} to both sides of $K+(\lambda+\mu)E=(K+\lambda E)+\mu E$ for $\lambda,\mu \ge 0$ and equating the coefficients of $\mu^i$ in both polynomials, we obtain the values of the relative $i$-th quermassintegrals of $K+\lambda E$, namely
\begin{equation}\label{e:steinerW_i}
\W_i(K+\lambda E;E)=\sum_{k=0}^{n-i} \binom{n-i}{k}\W_{i+k}(K;E)\lambda^k,
\end{equation}
for $\lambda\geq 0$ and $i=0,\dots,n$ (cf.\ \cite[Theorem 6.14]{Grub}).

If $E=B_n$, the polynomial on the right-hand side of
\eqref{e:steiner_rel} becomes the classical {\em Steiner polynomial}, see \cite{Ste40}. Then the coefficient $\binom{n}{1}\W_1(K;B_n)$ in \eqref{e:steiner_rel} happens to be the surface area $\Sa(K)$ if $K \in \mathcal{K}_n^n$. This motivates the definition of the {\em surface area of $K$ relative to $E \in \mathcal{K}_n^n$} by 
\begin{equation}\label{e: rel Sa}
\Sa(K;E)=n\W_1(K;E)
\end{equation}
(see e.g.\ \cite[Section 5.1.2]{Ha57}) and the introduction of the relative version of \eqref{e:class isop quot}.

\begin{defn}[\cite{HCSq}]\label{isop quotient}Let $K,E\in\K_n^n$. The {\em isoperimetric quotient of $K$ relative to $E$} is defined as
\begin{equation*}
\label{e: isop quotient}
\I(K;E)=\frac{\Sa(K;E)^{n}}{\vol(K)^{n-1}}.
\end{equation*}
The function
\begin{equation}\label{e: isop quotient function}
\I(\lambda)=\I(K_\lambda;E),
\end{equation}
$\lambda \in (-\inr(K;E),\infty)$, is called the {\em isoperimetric quotient function of $K$ with respect to $E$}.
\end{defn}

Note that $\I(K;E)$ is invariant under homotheties of $K$, since the $i$-th relative quermassintegral is positively homogeneous of degree $n-i$, i.e., $\W_i(\mu K;E)=\mu^{n-i}\W_i(K;E)$ for any $\mu\geq0$.

Our goal is to analyze the behaviour of \eqref{e: isop quotient function}, focusing on the question whether it is a monotonic function in (some parts of) its domain. 

A first naive approach to this matter is the natural question whether there are convex bodies $K,E\in\K^n_n$ for which the isoperimetric quotient function is constant in $(-\inr(K;E),0]$. We observe that $K_{-\inr(K;E)}$ has no interior points, thus we need to exclude it.

Tangential bodies provide us with a positive answer to this question.

\begin{defn}[{\cite[p.\ 149, text preceding Lemma 3.1.14]{Sch}\label{d: tang body}}]
Let $K,E \in \K_n^n$ be such that $E \subseteq K$. Then the body $K$ is a {\em tangential body of $E$} if and only if through each boundary point of $K$ there exists a supporting hyperplane of $K$ that also supports $E$.
\end{defn}

Indeed, inner parallel bodies and tangential bodies happen to be intrinsically connected by means of a homothety relation. The following result enlightens the close connection between inner parallel bodies and tangential bodies, providing us with a constant isoperimetric quotient function on the range $(-\inr(K;E),0)$ of inner parallel bodies.

\begin{thm}[{\cite[Lemma 3.1.14]{Sch}}]
\label{th: Sch tang}
Let $K,E\in\K^n_n$ be convex bodies, and let $\lambda\in\bigl(-\inr(K;E),0\bigr)$. Then
$K_{\lambda}$ is homothetic to $K$ if and only if $K$ is homothetic to a
tangential body of $E$.
\end{thm}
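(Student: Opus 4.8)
The plan is to prove the two implications separately, in each case reducing — via the invariance of everything under homotheties of $K$, together with the fact that parallel bodies commute with homotheties ($(\alpha K+v)_{\alpha\lambda}=\alpha K_\lambda+v$ for $\alpha>0$) — to a normalized situation and then computing with the halfspace representation \eqref{eq:  Kl intersection}. Set $\rho=\inr(K;E)$. Two elementary facts are used repeatedly: $h_{K_{-s}}(u)\le h_K(u)-sh_E(u)$ for all $u\in\s^{n-1}$ and $0\le s\le\rho$ (a direct consequence of \eqref{eq:  Kl intersection}), and the inradius identity $\inr(K_{-s};E)=\rho-s$, which follows from $x+rE\subseteq K\sim sE\iff x+(r+s)E\subseteq K$.

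For the \emph{``if''} direction, assume first that $K$ is itself a tangential body of $\rho E$; the general case that $K$ is homothetic to a tangential body of $E$ then follows from the commutation rule above. Let $\mathcal{U}_0=\{u\in\s^{n-1}:h_K(u)=\rho h_E(u)\}$ be the set of outer normals of hyperplanes supporting both $K$ and $\rho E$. By Definition~\ref{d: tang body} every boundary point of $K$ lies on $H_{u,h_K(u)}$ for some $u\in\mathcal{U}_0$; combined with $K\subseteq\bigcap_{u\in\mathcal{U}_0}H^-_{u,h_K(u)}$ and a connectedness argument, this forces $K=\bigcap_{u\in\mathcal{U}_0}H^-_{u,h_K(u)}$. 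Now fix $s\in(0,\rho)$. For $u\in\mathcal{U}_0$ one has $h_K(u)-sh_E(u)=(1-s/\rho)h_K(u)$, so restricting the intersection in \eqref{eq:  Kl intersection} to $\mathcal{U}_0$ gives $K_{-s}\subseteq(1-s/\rho)K$. Conversely, for \emph{every} $u\in\s^{n-1}$ the inequality $(1-s/\rho)h_K(u)\le h_K(u)-sh_E(u)$ is equivalent to $\rho h_E(u)\le h_K(u)$, i.e.\ to $\rho E\subseteq K$, which holds; hence $(1-s/\rho)K\subseteq K_{-s}$ by \eqref{eq:  Kl intersection}. Thus $K_{-s}=(1-s/\rho)K$ for all $s\in(0,\rho)$; letting $s\uparrow\rho$ and using continuity of the Minkowski difference one also obtains $\inr(K;E)=\rho$, so $K_\lambda=(1+\lambda/\rho)K$ is homothetic to $K$ for every $\lambda\in(-\inr(K;E),0)$.

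For the \emph{``only if''} direction, suppose $K_{-s}=\mu K+z$ for some fixed $s\in(0,\rho)$, $\mu>0$ and $z\in\R^n$. Comparing inradii yields $\mu\rho=\inr(\mu K+z;E)=\inr(K_{-s};E)=\rho-s$, so $\mu=1-s/\rho\in(0,1)$. Substituting $h_{K_{-s}}(u)=\mu h_K(u)+\langle z,u\rangle$ into $h_{K_{-s}}(u)\le h_K(u)-sh_E(u)$ and rearranging gives $h_{\rho E+(\rho/s)z}(u)\le h_K(u)$ for all $u$, hence $\rho E+c\subseteq K$ with $c=(\rho/s)z$; this is an inscribed homothet of $E$ with the maximal factor $\rho$. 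Replacing $K$ by $K-c$ (harmless, as the conclusion is homothety-invariant) we may assume $c=0$, hence $z=0$, so that $K_{-s}=\mu K$ and $\rho E\subseteq K$. Write $\mu K=K_{-s}=\bigcap_{u\in\s^{n-1}}H^-_{u,f(u)}$ with $f(u)=h_K(u)-sh_E(u)$, and let $x$ be any boundary point of $\mu K$. The map $u\mapsto f(u)-\langle x,u\rangle$ is nonnegative and continuous on the compact sphere $\s^{n-1}$; if it were strictly positive everywhere, its minimum would be positive and $x$ would be an interior point of $\mu K$. Hence there is $v\in\s^{n-1}$ with $\langle x,v\rangle=f(v)=h_K(v)-sh_E(v)$. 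Since $x\in\mu K$, we get $h_K(v)-sh_E(v)=\langle x,v\rangle\le h_{\mu K}(v)=\mu h_K(v)\le h_K(v)-sh_E(v)$, so all these are equalities; thus $(1-\mu)h_K(v)=sh_E(v)$, i.e.\ $h_K(v)=\rho h_E(v)$, and $\langle x,v\rangle=\mu h_K(v)=h_{\mu K}(v)=h_{\mu\rho E}(v)$. Therefore the supporting hyperplane of $\mu K$ at $x$ with outer normal $v$ also supports $\mu\rho E\subseteq\mu K$. As $x$ was arbitrary, $\mu K$ is a tangential body of $\mu\rho E$ by Definition~\ref{d: tang body}, so $K=\mu^{-1}(\mu K)$ is homothetic to a tangential body of $E$.

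I expect this last paragraph to be the main obstacle. A naive iteration of the relation $K_{-s}=\mu K+z$ only shows that $K_{-\rho}$ is a single point, a condition that is necessary but by no means sufficient for the conclusion (for a Reuleaux triangle relative to a disk, $K_{-\rho}$ is a single point although the triangle is not homothetic to a tangential body of the disk); the real work is to exploit the halfspace description of the Minkowski difference in \eqref{eq:  Kl intersection}, together with the compactness of $\s^{n-1}$, to produce at \emph{every} boundary point of $K_{-s}$ an active defining constraint whose normal is automatically forced to be a common contact normal of $K$ and $\rho E$. A secondary technical point is pinning down the ratio $\mu$ and the centre of the homothety — the reduction to $z=0$ — which turns $K_{-s}\mapsto\mu K$ into a pure dilation and lets the argument close cleanly.
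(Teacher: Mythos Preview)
Your proof is correct. Note, however, that the paper does not give its own proof of this statement: Theorem~\ref{th: Sch tang} is quoted directly from Schneider's book \cite[Lemma~3.1.14]{Sch} and used as a black box. So there is no ``paper's proof'' to compare against; you have supplied a self-contained argument where the paper relies on a citation.

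A few minor comments on presentation. In the ``if'' direction, your ``connectedness argument'' for $K=\bigcap_{u\in\mathcal{U}_0}H^-_{u,h_K(u)}$ is a little cryptic. The cleanest justification is that at every \emph{regular} boundary point of $K$ the unique outer normal must lie in $\mathcal{U}_0$ (since the unique supporting hyperplane there must support $\rho E$), so $\U(K)\subseteq\mathcal{U}_0$; as $\mathcal{U}_0$ is closed, it determines $K$ by the standard fact recorded in the paper as Remark~\ref{KintersectionExtreme}. Alternatively, a radial-projection argument from an interior point works: once you know $\partial K\subseteq\partial L$ with $L$ the right-hand intersection and $K\subseteq L$, any ray from an interior point of $K$ exits both bodies at the same point, forcing $L=K$. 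Either route is short; ``connectedness'' undersells what is actually used. Also, the sentence ``letting $s\uparrow\rho$ \ldots\ one also obtains $\inr(K;E)=\rho$'' is superfluous, since you fixed $\rho=\inr(K;E)$ at the outset.

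The ``only if'' direction is the substantive part and is handled well: pinning down $\mu=1-s/\rho$ via inradii, translating to kill $z$, and then using compactness of $\s^{n-1}$ together with \eqref{eq:  Kl intersection} to find, at each boundary point of $K_{-s}$, an active constraint whose normal is forced to be a common contact normal of $K$ and $\rho E$. This is exactly the right mechanism.
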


We notice, that if $K$ is a tangential body of $E$, then $\inr(K;E)=1$.
It follows that the isoperimetric quotient function $\I(\lambda)$ is  constant for $-\inr(K;E)=-1<\lambda\leq 0$ if $K$ is a tangential body of $E$.

Although the relative quermassintegral $\W_i(\cdot;E):\K^n\rightarrow \R$, $i=0,\dots,n-1$, is, in general, not linear under Minkowski addition and scalar multiplication, the $(n-i)$-th root of $\W_i(\cdot;E)$ is concave. In particular, the $n$-th root of the volume is a concave function. 
This is a consequence of the fundamental Aleksandrov--Fenchel inequality \cite[Theorem 7.3.1]{Sch}.

\begin{thm}[{\cite[Chapter~7]{Sch}}] (General Brunn--Minkowski theorem for quer\-mass\-integrals).\label{gral BM th}
Let $K,L,E\in\K_n^n$, $0\leq i\leq n-1$, and let $\mu\in[0,1]$. Then,
\begin{equation}\label{BM ineq quermass}
\W_i\bigl(\mu K+(1-\mu)L;E\bigr)^\frac{1}{n-i}\geq \mu\W_i(K;E)^\frac{1}{n-i}+(1-\mu)\W_i(L;E)^\frac{1}{n-i}.
\end{equation}

For $i=0$, i.e., for the volume, equality holds if and only if $K$ and $L$ are homothetic.

If $E$ is smooth and $0 \le i \le n-2$, equality holds if and only if $K$ and $L$ are homothetic.
\end{thm}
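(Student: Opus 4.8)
The plan is to deduce the statement from the Aleksandrov--Fenchel inequality \cite[Theorem~7.3.1]{Sch}, which is also how it is obtained in \cite[Chapter~7]{Sch}. Write $V(C_1,\dots,C_n)$ for the mixed volume of $C_1,\dots,C_n\in\K^n$ (see \cite[Section~5.1]{Sch}) and let $A[j]$ abbreviate $j$-fold repetition of a body $A$ among the arguments of $V$, so that $\W_i(K;E)=V(K[n-i],E[i])$. Recall that mixed volumes are symmetric and, by multilinearity, additive and positively homogeneous in each slot, and that $\W_i(K;E),\W_i(L;E)>0$ since $K,L\in\K_n^n$. The index $i=n-1$ is trivial: $\W_{n-1}(\cdot\,;E)=V(\cdot\,,E[n-1])$ is linear under Minkowski combinations, so \eqref{BM ineq quermass} is then an identity for all $K,L$ — which is exactly why no equality characterisation is claimed in that case. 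So assume $0\le i\le n-2$ and set $m=n-i\ge 2$.

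For the inequality I would expand the left-hand side by multilinearity,
\begin{equation}\label{eq:gralBMexpand}
\W_i\bigl(\mu K+(1-\mu)L;E\bigr)=\sum_{k=0}^{m}\binom{m}{k}\mu^{k}(1-\mu)^{m-k}\,a_k,\qquad a_k:=V\bigl(K[k],L[m-k],E[i]\bigr),
\end{equation}
noting $a_0=\W_i(L;E)$ and $a_m=\W_i(K;E)$. Applying the Aleksandrov--Fenchel inequality with distinguished bodies $K,L$ and the remaining $n-2$ arguments taken to be $K[k-1],L[m-k-1],E[i]$ gives $a_k^2\ge a_{k-1}a_{k+1}$ for $1\le k\le m-1$; hence $k\mapsto\log a_k$ is concave, whence $a_k\ge a_0^{(m-k)/m}a_m^{k/m}$. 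Substituting into \eqref{eq:gralBMexpand} and collapsing the sum by the binomial theorem gives
\begin{equation*}
\W_i\bigl(\mu K+(1-\mu)L;E\bigr)\ge\Bigl(\mu\,\W_i(K;E)^{1/m}+(1-\mu)\,\W_i(L;E)^{1/m}\Bigr)^{m},
\end{equation*}
and taking $m$-th roots yields \eqref{BM ineq quermass}.

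For the equality cases I read the ``if and only if'' for $\mu\in(0,1)$, since \eqref{BM ineq quermass} is trivially an identity at $\mu\in\{0,1\}$. If $K$ and $L$ are homothetic, equality is immediate from the translation invariance and degree-$m$ homogeneity of $\W_i(\cdot\,;E)$. Conversely, equality in \eqref{BM ineq quermass} forces $a_k=a_0^{(m-k)/m}a_m^{k/m}$ for every $k$, hence $a_k^2=a_{k-1}a_{k+1}$ for $1\le k\le m-1$, i.e.\ equality in every Aleksandrov--Fenchel instance used above. For $i=0$ this is the classical Brunn--Minkowski inequality, whose equality case is well known to characterise homothety of $K$ and $L$ \cite[Chapter~7]{Sch}. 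For $1\le i\le n-2$ with $E$ smooth, at least one copy of the smooth gauge $E$ appears among the fixed bodies of each of these instances, and one invokes the equality conditions for the Aleksandrov--Fenchel inequality available in that smooth setting to conclude that $K$ and $L$ are homothetic; this is the content of the equality discussion in \cite[Chapter~7]{Sch}.

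I expect the genuine obstacle to be this last point. The inequality and the trivial direction of the equality case are routine manipulations with mixed volumes, but the converse for smooth $E$ rests on the equality conditions of the Aleksandrov--Fenchel inequality — only partially understood in general — together with the bookkeeping needed to propagate the per-slot equalities in \eqref{eq:gralBMexpand} into the single statement that $K$ and $L$ are homothetic; it is precisely for this that the hypothesis ``$E$ smooth'' is imposed.
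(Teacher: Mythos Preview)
The paper does not give its own proof of this theorem: immediately after the statement it simply records that the inequality is \cite[Theorem~7.4.5]{Sch}, the equality case for $i=0$ is \cite[Theorem~7.1.1]{Sch}, and the equality case for smooth $E$ follows from \cite[Theorems~7.4.6 and 7.6.9]{Sch}. Your sketch is correct and is exactly the standard route taken in \cite[Chapter~7]{Sch} --- expand by multilinearity, apply Aleksandrov--Fenchel to obtain log-concavity of the sequence $a_k$, and collapse via the binomial theorem --- so there is nothing to compare beyond noting that you have supplied the argument the paper merely cites.
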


Here, a convex body is said to be smooth if it has only one supporting hyperplane at every point of its boundary.
The inequality in Theorem~\ref{gral BM th} can be found in \cite[Theorem 7.4.5]{Sch}, the characterization of equality for $i=0$ is given in \cite[Theorem~7.1.1]{Sch}, and the equality case for smooth $E$ is a consequence of \cite[Theorems 7.4.6 and 7.6.9]{Sch}.

To prove the monotonicity of the quotient function $\I(\lambda)$, the derivative of the volume $\vol(K_\lambda)$, as a function of $\lambda \in (-\inr(K;E),\infty)$, turns out to be crucial.
In order to deal with derivatives of the quermassintegrals $\W_i(K_\lambda;E)$, in particular of the volume $\vol(K_\lambda)=W_0(K_\lambda;E)$, we need the following results.

\begin{lem}[{\cite[Lemma 3.1.13]{Sch}}]\label{fact:parallel_concave}
For all $K,E \in \mathcal{K}_n^n$,
$\lambda_0,\lambda_1\in\bigl[-\inr(K;E),\infty\bigr)$ and $\mu\in[0,1]$,
\begin{equation*}
\mu K_{\lambda_0}+ (1-\mu) K_{\lambda_1} \subseteq K_{\mu\lambda_0+(1-\mu)\lambda_1},
\end{equation*}
i.e., the family $(K_\lambda)_{\lambda \ge -\inr(K;E)}$ of all parallel bodies is concave.
\end{lem}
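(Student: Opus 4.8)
The plan is to reduce everything to the halfspace representation \eqref{eq:  Kl intersection} together with the elementary fact that the support function is additive and positively homogeneous under Minkowski combinations, i.e.\ $h_{\mu A + (1-\mu)B} = \mu h_A + (1-\mu) h_B$ for $A,B \in \K^n$ and $\mu \in [0,1]$. First I would record that the parameter $\lambda_\mu := \mu\lambda_0 + (1-\mu)\lambda_1$ lies in $[-\inr(K;E),\infty)$, since it is a convex combination of $\lambda_0,\lambda_1$, both of which lie in that interval; hence $K_{\lambda_\mu}$ is well defined as an element of $\K^n$ and \eqref{eq:  Kl intersection} applies to it.

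The core step is a pointwise (in $u$) estimate of support functions. Fix $u \in \s^{n-1}$. From \eqref{eq:  Kl intersection} we have $K_{\lambda_j} \subseteq H^-_{u,\,h_K(u)+\lambda_j h_E(u)}$ for $j=0,1$, which means $h_{K_{\lambda_j}}(u) \le h_K(u) + \lambda_j h_E(u)$. Combining these with the additivity of support functions under Minkowski combinations gives
\begin{equation*}
h_{\mu K_{\lambda_0} + (1-\mu) K_{\lambda_1}}(u)
= \mu\, h_{K_{\lambda_0}}(u) + (1-\mu)\, h_{K_{\lambda_1}}(u)
\le h_K(u) + \lambda_\mu h_E(u).
\end{equation*}
Equivalently, $\mu K_{\lambda_0} + (1-\mu) K_{\lambda_1} \subseteq H^-_{u,\,h_K(u)+\lambda_\mu h_E(u)}$ for this $u$.

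Finally I would intersect over all $u \in \s^{n-1}$: since the inclusion into $H^-_{u,\,h_K(u)+\lambda_\mu h_E(u)}$ holds for every $u$, we obtain
\begin{equation*}
\mu K_{\lambda_0} + (1-\mu) K_{\lambda_1}
\subseteq \bigcap_{u \in \s^{n-1}} H^-_{u,\,h_K(u)+\lambda_\mu h_E(u)}
= K_{\lambda_\mu},
\end{equation*}
again by \eqref{eq:  Kl intersection}, which is exactly the claimed concavity of the family $(K_\lambda)_{\lambda \ge -\inr(K;E)}$. I do not anticipate a serious obstacle here: the only point that needs a word of care is the validity of the intersection formula \eqref{eq:  Kl intersection} for negative $\lambda$ (the inner parallel range), but this is precisely what is cited there from \cite[Lemma~4.1]{SYphd} and \cite[Theorem 1.7.5(a)]{Sch}, so it may be invoked directly. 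Everything else is the bookkeeping of support functions.
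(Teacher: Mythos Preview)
Your argument is correct. The paper itself does not prove this lemma---it is quoted from Schneider \cite[Lemma 3.1.13]{Sch} without proof---so there is no in-paper argument to compare against; your route via the halfspace representation \eqref{eq:  Kl intersection} and the linearity of support functions under Minkowski combinations is the standard one and goes through without any gaps.
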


Given fixed $K,E \in \mathcal{K}_n^n$, we are interested in differentiability properties of the functions
\[
\W_i(\lambda)=\W_i(K_\lambda;E),
\]
$\lambda \in (-\inr(K;E),\infty)$, for $i=0,\ldots,n-1$. In the sequel we write $\frac{\dlat^-}{\dlat\lambda}f(\lambda)$, $\frac{\dlat^+}{\dlat\lambda}f(\lambda)$ and $\frac{\dlat}{\dlat\lambda}f(\lambda)$ for the left, right and both-sided derivative of a function $f$, respectively, implicitly stating that that quantity exists.

Theorem~\ref{gral BM th} and Lemma~\ref{fact:parallel_concave} yield
\begin{equation}\label{e:concav Wi}
\frac{\dlat^-}{\dlat\lambda}\W_i(\lambda)\geq\frac{\dlat^+}{\dlat\lambda}\W_i(\lambda)\geq(n-i)\W_{i+1}(\lambda)
\end{equation}
for all $K,E \in \mathcal{K}_n^n$, $i=0,\ldots,n-1$ and $\lambda \in (-\inr(K;E),\infty)$
(see \cite[formula (1.5)]{HCS}, \cite[p.\ 439, Note 7]{Sch}).

For $i=0$, i.e., for the case of the volume, even more is known. 

\begin{prop}
\label{p: diff vol}
Let $K\in \K^n$, $E\in\K^n_n$.
Then, for $-\inr(K;E)\le\lambda<\infty$, the function $\vol(\lambda)=\vol(K_\lambda)$ is differentiable and satisfies
\begin{equation}\label{e: der vol1}
\frac{\dlat}{\dlat\lambda}\vol(\lambda)=n\W_1(\lambda)
\end{equation}
with only right derivative for $\lambda=-\inr(K;E)$.
\end{prop}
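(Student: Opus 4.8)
The plan is to split into the outer region $\lambda \ge 0$ and the inner region $-\inr(K;E) \le \lambda \le 0$, handle the outer region directly from the relative Steiner formula, and then transfer the result to the inner region by a limiting/concatenation argument using the semigroup identity $K_{\lambda+\mu} = (K_\lambda)_\mu$.

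First I would treat $\lambda \ge 0$. Here $\vol(K_\lambda) = \vol(K + \lambda E)$ is, by the relative Steiner formula \eqref{e:steiner_rel}, a polynomial of degree $n$ in $\lambda$, hence differentiable with $\frac{\dlat}{\dlat\lambda}\vol(\lambda) = \sum_{i=1}^n i\binom{n}{i}\W_i(K;E)\lambda^{i-1}$. Using $i\binom{n}{i} = n\binom{n-1}{i-1}$ and comparing with \eqref{e:steinerW_i} for $i=1$, which gives $\W_1(K+\lambda E;E) = \sum_{k=0}^{n-1}\binom{n-1}{k}\W_{1+k}(K;E)\lambda^k$, one sees the derivative equals $n\W_1(K+\lambda E;E) = n\W_1(\lambda)$. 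This establishes \eqref{e: der vol1} on $[0,\infty)$ with two-sided derivatives for $\lambda > 0$ and right derivative at $\lambda = 0$.

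Next I would handle $-\inr(K;E) \le \lambda < 0$. Fix such a $\lambda$ and set $L = K_\lambda \in \K^n_n$ (interior is nonempty since $\lambda > -\inr(K;E)$; the boundary case $\lambda = -\inr(K;E)$ is dealt with separately below). For small $\mu \ge 0$ we have $(K_\lambda)_\mu = K_{\lambda + \mu} = L + \mu E$, and the outer-region computation just performed applied to $L$ gives the right derivative $\frac{\dlat^+}{\dlat\lambda}\vol(\lambda) = n\W_1(L;E) = n\W_1(\lambda)$. For the left derivative, I would invoke the concavity of $\lambda \mapsto \vol(\lambda)^{1/n}$, which follows from Lemma~\ref{fact:parallel_concave} together with Theorem~\ref{gral BM th} for $i=0$: a concave function on an open interval has one-sided derivatives everywhere with $\frac{\dlat^-}{\dlat\lambda} \ge \frac{\dlat^+}{\dlat\lambda}$, and moreover the left derivative at $\lambda$ is the limit of the (continuous, since polynomial on each outer piece) right derivatives at $\lambda' \uparrow \lambda$; since $\mu \mapsto n\W_1(K_{\lambda'};E)$ is continuous in $\lambda'$ — this is where \eqref{e:concav Wi} and the continuity of mixed volumes enter — the left and right derivatives agree, yielding two-sided differentiability with value $n\W_1(\lambda)$. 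Equivalently, one applies \eqref{e:concav Wi} with $i=0$ to get $\frac{\dlat^-}{\dlat\lambda}\vol(\lambda) \ge \frac{\dlat^+}{\dlat\lambda}\vol(\lambda) \ge n\W_1(\lambda)$, and then uses a monotonicity/continuity argument on $\W_1(\lambda)$ to squeeze the left derivative down to $n\W_1(\lambda)$.

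The main obstacle is establishing the reverse inequality $\frac{\dlat^-}{\dlat\lambda}\vol(\lambda) \le n\W_1(\lambda)$ on the inner region; the forward inequality is immediate from \eqref{e:concav Wi}. I expect to close this gap by exploiting that for $\lambda' < \lambda$ the difference quotient $\frac{\vol(\lambda) - \vol(\lambda')}{\lambda - \lambda'}$ can be estimated from above using $K_\lambda = K_{\lambda'} + (\lambda - \lambda')E$ only when $\lambda' \ge -\inr(K;E)$ guarantees this additive representation — which it does on the inner range by the semigroup property — combined with the right-differentiability already proved at $\lambda'$ and upper semicontinuity of $\W_1$. Finally, for $\lambda = -\inr(K;E)$, only the right derivative is claimed; it follows by applying the right-derivative formula at points $\lambda' \downarrow -\inr(K;E)$ and passing to the limit, using continuity of $\vol(\lambda)$ and of $\W_1(\lambda)$ at the endpoint (the latter from the continuity of mixed volumes, with $\vol(K_{-\inr(K;E)}) = 0$ consistent with lower-dimensionality).
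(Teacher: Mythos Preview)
The paper does not give a self-contained proof of this proposition: for $\lambda \ge 0$ it appeals to the Steiner formula \eqref{e:steiner_rel} (as you do), and for the inner range $-\inr(K;E) \le \lambda \le 0$ it simply cites Bol \cite{Bol43} and Matheron \cite{Mat}. So there is no argument in the paper to compare your inner-region sketch against.

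Your proposal has a genuine gap. The identity you invoke, $(K_\lambda)_\mu = K_{\lambda+\mu}$, i.e.\ $K_\lambda + \mu E = K_{\lambda+\mu}$ for $\lambda < 0$ and $\mu > 0$, is \emph{false} in general. Only the inclusion $K_\lambda + \mu E \subseteq K_{\lambda+\mu}$ holds: Lemma~\ref{lem: in of out} gives $K_\lambda = K_{\lambda+\mu} \sim \mu E$, and $(M \sim \mu E) + \mu E \subseteq M$ can be strict. For instance, with $K$ a square and $E=B_2$, the body $K_\lambda$ is a smaller square, so $K_\lambda + \mu E$ has rounded corners, whereas $K_{\lambda+\mu}$ is still a square. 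Consequently, applying the Steiner formula to $L=K_\lambda$ only yields
\[
\frac{\dlat^+}{\dlat\lambda}\vol(\lambda) \;\ge\; n\W_1(\lambda),
\]
which is exactly the easy half of \eqref{e:concav Wi} for $i=0$, not the claimed equality. You then reuse the same false identity (``$K_\lambda = K_{\lambda'} + (\lambda-\lambda')E$'') in the paragraph meant to close the ``main obstacle'', so the argument is circular: the concavity/continuity reasoning you outline never produces an upper bound on the one-sided derivatives, because at every $\lambda'$ you only have $\frac{\dlat^+}{\dlat\lambda}\vol(\lambda') \ge n\W_1(\lambda')$, and passing to limits preserves the inequality in the same direction.

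The reverse inequality $\frac{\dlat^-}{\dlat\lambda}\vol(\lambda) \le n\W_1(\lambda)$ on the inner range is precisely the substantive content of \cite{Bol43,Mat}; it requires a separate argument (e.g.\ Matheron's integral representation of volume via relative surface areas) and does not follow from concavity and continuity of $\W_1$ alone.
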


For $\lambda \ge 0$, equation \eqref{e: der vol1} (with right derivative at $\lambda=0$) is a consequence of \eqref{e:steiner_rel}. For $-\inr(K;E) \le \lambda \le 0$ (with left derivative at $\lambda=0$), we refer to \cite{Bol43,Mat}.

For the particular case $E=B_n$, Hadwiger \cite[p.\ 207, formula (30)]{Ha57} has shown that $\frac{\dlat}{\dlat\lambda}\vol(\lambda)=\Sa(K_\lambda)$
for all $\lambda \in (-\inr(K;B_n),\infty)$. Since $n\W_1(K;E)=\Sa(K;E)$, \eqref{e: der vol1} amounts to
\begin{equation}\label{e: der vol}
\frac{\dlat}{\dlat\lambda}\vol(\lambda)=\Sa(K_\lambda;E).
\end{equation}

\begin{rem}
For $\lambda\ge 0$, the differentiability of all quermassintegrals at $\lambda$ (with right derivative for $\lambda=0$) follows from \eqref{e:steinerW_i}, along with the equality $\frac{\dlat}{\dlat\lambda}\W_i(\lambda)=(n-i)\W_{i+1}(\lambda)$. 
\end{rem}

The question for which convex bodies there is equality in \eqref{e:concav Wi} on the domain $-\inr(K;E)<\lambda<\infty$, in particular at $\lambda=0$, is not completely understood yet. We refer the reader to \cite{HCS} for some results in this direction.


\section{Monotonicity of the isoperimetric quotient for parallel bodies}

In this section we will prove that the isoperimetric quotient function is decreasing. Our result is prepared by some auxiliary statements. The first one is an immediate consequence of formula \eqref{e: der vol} or Proposition~\ref{p:  diff vol}.

\begin{lem}\label{l:der vol^1/n}
Let $K,E \in \mathcal{K}_n^n$ and $\lambda \in (-\inr(K;E),\infty)$. Then
\begin{equation}\label{e:der vol^1/n}
\frac{\dlat}{\dlat \lambda} \left(\vol(\lambda)^{\frac{1}{n}}\right)=\frac{1}{n}\vol(\lambda)^{\frac{1-n}{n}}\Sa(K_\lambda;E).
\end{equation}
\end{lem}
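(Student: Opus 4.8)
The plan is to apply the chain rule to the composition $\lambda \mapsto \vol(\lambda)^{1/n}$, treating $\vol(\lambda)$ as the inner function and $t \mapsto t^{1/n}$ as the outer one. By Proposition~\ref{p: diff vol}, the function $\vol(\lambda)$ is differentiable on $(-\inr(K;E),\infty)$ with $\frac{\dlat}{\dlat\lambda}\vol(\lambda) = n\W_1(\lambda)$. Since $K_\lambda \in \K_n^n$ for every $\lambda$ in the open interval $(-\inr(K;E),\infty)$, we have $\vol(\lambda) > 0$, so the outer function $t \mapsto t^{1/n}$ is differentiable at $t = \vol(\lambda)$ with derivative $\frac{1}{n}\vol(\lambda)^{\frac{1-n}{n}}$.

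Composing, I would write
\begin{equation*}
\frac{\dlat}{\dlat\lambda}\left(\vol(\lambda)^{\frac{1}{n}}\right) = \frac{1}{n}\vol(\lambda)^{\frac{1-n}{n}} \cdot \frac{\dlat}{\dlat\lambda}\vol(\lambda) = \frac{1}{n}\vol(\lambda)^{\frac{1-n}{n}} \cdot n\W_1(\lambda).
\end{equation*}
Using the definition $\Sa(K_\lambda;E) = n\W_1(K_\lambda;E) = n\W_1(\lambda)$ from \eqref{e: rel Sa}, the factor $n\W_1(\lambda)$ equals $\Sa(K_\lambda;E)$, which yields \eqref{e:der vol^1/n} after noting that the outer $\frac1n$ and the $n$ from the inner derivative do not cancel (one $n$ remains, giving the factor $\frac{1}{n}$). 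Equivalently, one may invoke \eqref{e: der vol}, which states $\frac{\dlat}{\dlat\lambda}\vol(\lambda) = \Sa(K_\lambda;E)$ directly, making the substitution immediate.

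There is essentially no obstacle here: the only point requiring a moment's care is confirming that $\vol(\lambda) > 0$ throughout the \emph{open} interval, so that $t^{1/n}$ is genuinely differentiable (not merely one-sidedly differentiable with a vertical tangent, as would happen at $\vol = 0$). This holds because the excerpt notes that $K_{-\inr(K;E)}$ has dimension strictly less than $n$ while every $K_\lambda$ with $\lambda > -\inr(K;E)$ lies in $\K_n^n$; since the statement of the lemma restricts to $\lambda \in (-\inr(K;E),\infty)$, the endpoint is excluded and the chain rule applies without any one-sided subtleties. The result is then immediate.
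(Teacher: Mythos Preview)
Your proof is correct and follows the same route as the paper, which simply states that the lemma is an immediate consequence of formula~\eqref{e: der vol} (equivalently, Proposition~\ref{p: diff vol}); you have merely spelled out the chain-rule computation and the positivity of $\vol(\lambda)$ that the paper leaves implicit.
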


The following statement corresponds to formula (18) of \cite[Section 6]{Ha55} if $E=B_n$. Although the proof is analogous to the proof in the case  $E=B_n$, we include it for the sake of completeness.

\begin{lem}
\label{lem: in of out}
For all $K,E \in \mathcal{K}_n^n$ and $-\inr(K;E) < \lambda_0 < \lambda_1 < \infty$,
$$
K_{\lambda_0}=K_{\lambda_1} \sim |\lambda_0-\lambda_1|E.
$$
\end{lem}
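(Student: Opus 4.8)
The plan is to verify the set equality by double inclusion, using the characterization of parallel bodies as intersections of halfspaces in \eqref{eq:  Kl intersection} together with the definition of Minkowski difference. Write $\mu=\lambda_1-\lambda_0>0$, so that the claim reads $K_{\lambda_0}=K_{\lambda_1}\sim\mu E$, i.e., $K_{\lambda_0}=\{x\in\R^n:\mu E+x\subseteq K_{\lambda_1}\}$. First I would record the ``$\subseteq$'' inclusion: since $\lambda_1=\lambda_0+\mu$ with both $\lambda_0,\lambda_1$ in the admissible range and $\mu\ge 0$, Lemma~\ref{fact:parallel_concave} (or the defining relations directly) gives $K_{\lambda_0}+\mu E\subseteq K_{\lambda_0+\mu}=K_{\lambda_1}$; hence every $x\in K_{\lambda_0}$ satisfies $x+\mu E\subseteq K_{\lambda_1}$, which is exactly $x\in K_{\lambda_1}\sim\mu E$.

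For the reverse inclusion ``$\supseteq$'', I would use the halfspace description. If $x\in K_{\lambda_1}\sim\mu E$, then $x+\mu E\subseteq K_{\lambda_1}=\bigcap_{u\in\s^{n-1}}H^-_{u,h_K(u)+\lambda_1 h_E(u)}$. Fix $u\in\s^{n-1}$. Since $x+\mu E\subseteq H^-_{u,h_K(u)+\lambda_1 h_E(u)}$, taking support functions in direction $u$ yields $\langle x,u\rangle+\mu h_E(u)=h_{x+\mu E}(u)\le h_K(u)+\lambda_1 h_E(u)$, so $\langle x,u\rangle\le h_K(u)+(\lambda_1-\mu)h_E(u)=h_K(u)+\lambda_0 h_E(u)$. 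As this holds for every $u\in\s^{n-1}$, we get $x\in\bigcap_{u\in\s^{n-1}}H^-_{u,h_K(u)+\lambda_0 h_E(u)}=K_{\lambda_0}$, again by \eqref{eq:  Kl intersection}. This completes the proof.

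The only point requiring a little care is that \eqref{eq:  Kl intersection} is valid only for parameters $\ge-\inr(K;E)$, which is why the hypothesis $-\inr(K;E)<\lambda_0$ is imposed; both $\lambda_0$ and $\lambda_1$ lie in this range, so both intersection formulas apply. I do not anticipate a serious obstacle here: the statement is essentially a translation of the semigroup property of parallel bodies under Minkowski addition into the language of Minkowski difference, and the support-function computation is the standard way to pass between the two. One could alternatively avoid \eqref{eq:  Kl intersection} entirely and argue purely from $K_{\lambda_1}=K\sim|\lambda_1|E$ (or $K+\lambda_1 E$, according to the sign of $\lambda_1$) using the associativity-type identity $(K\sim A)\sim B=K\sim(A+B)$ for Minkowski differences and the compatibility of $\sim$ with $+$; but the halfspace argument handles all sign cases of $\lambda_0,\lambda_1$ uniformly and is the cleaner route.
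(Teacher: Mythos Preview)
Your proof is correct and follows essentially the same double-inclusion argument via the halfspace description \eqref{eq:  Kl intersection} as the paper. One small remark: Lemma~\ref{fact:parallel_concave} concerns convex combinations of parallel bodies and does not literally yield $K_{\lambda_0}+\mu E\subseteq K_{\lambda_1}$; your parenthetical ``defining relations directly'' (i.e., \eqref{eq:  Kl intersection}) is the correct justification there, and indeed the paper uses the halfspace formula for both inclusions.
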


\begin{proof}
Let $K,E\in\K^n_n$ and $-\inr(K;E) < \lambda_0 < \lambda_1$.

Equation \eqref{eq:  Kl intersection} implies directly $K_{\lambda_1} \sim |\lambda_0-\lambda_1|E \subseteq K_{\lambda_0}$. 

For the converse, let $x \in K_{\lambda_0}$. Then, by \eqref{eq:  Kl intersection}, $x  \in \bigcap_{u \in \s^{n-1}} H^-_{u,h_K(u)+\lambda_0 h_E(u)}$. Thus,
\[
x+|\lambda_0-\lambda_1|E=x+(\lambda_1-\lambda_0)E \subseteq
\bigcap_{u \in \s^{n-1}} H^-_{u,(h_K(u)+\lambda_0 h_E(u))+(\lambda_1-\lambda_0)h_E(u)}
=K_{\lambda_1},
\]
whence $x \in K_{\lambda_1} \sim |\lambda_0-\lambda_1|E$.
\end{proof}

\begin{lem}
\label{lem:  K+E K}
Let $K,L \in \mathcal{K}_n^n$. If $K+L$ is homothetic to $L$ then $K$ is homothetic to $L$.
\end{lem}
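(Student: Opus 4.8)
The plan is to reduce everything to the \emph{Minkowski cancellation law}: for nonempty compact convex sets $A,B,C\subseteq\R^n$, the equality $A+C=B+C$ forces $A=B$. This follows at once from support functions, since $h_A+h_C=h_B+h_C$ gives $h_A=h_B$ and hence $A=B$.

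First I would spell out the hypothesis. Since all bodies occurring are full-dimensional, ``$K+L$ homothetic to $L$'' means that there are $\mu>0$ and $t\in\R^n$ with $K+L=\mu L+t$. Because $x+L\subseteq K+L$ for every $x\in K$, comparing volumes yields $\mu^n\vol(L)=\vol(K+L)\geq\vol(L)$, hence $\mu\geq 1$. If $\mu=1$, the cancellation law applied to $K+L=\{t\}+L$ would give $K=\{t\}$, contradicting $K\in\K_n^n$; therefore $\mu>1$.

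Now I would decompose $\mu L=L+(\mu-1)L$, so that $K+L=(\mu-1)L+t+L$, and cancel the common summand $L$ to obtain $K=(\mu-1)L+t$ with $\mu-1>0$. Thus $K$ is homothetic to $L$, as claimed.

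I do not expect a genuine obstacle here; the only points requiring a little care are recording the cancellation law correctly for general compact convex sets and observing that the full-dimensionality of $K$ rules out the degenerate possibility $\mu=1$ (a pure translation), which would otherwise not yield a homothety in the strict sense.
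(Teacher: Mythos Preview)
Your proof is correct and follows essentially the same route as the paper: both pass from the homothety $K+L=\mu L+t$ to support functions and subtract $h_L$ to obtain $K=(\mu-1)L+t$; your ``cancellation law'' is exactly this subtraction step. You add an explicit volume argument (and the full-dimensionality of $K$) to justify $\mu>1$, whereas the paper simply asserts $\alpha>1$ at the outset.
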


\begin{proof}
Let $K,L \in \mathcal{K}_n^n$, and let $K+L$ be homothetic to $L$. Then, there are $\alpha > 1$ and $x_0 \in \mathbb{R}^n$ such that $K+L=\alpha L + x_0$. Hence, rewriting the latter in terms of  support functions 
$$
h(K,\cdot)+h(L,\cdot)=h(K+L,\cdot)=h(\alpha L+x_0,\cdot)=\alpha h(L,\cdot)+\langle x_0,\cdot\rangle,
$$
this yields
$$
h(K,\cdot)=(\alpha-1)h(L,\cdot)+\langle x_0,\cdot\rangle=h((\alpha-1)L+x_0,\cdot)
$$
and in turn $K=(\alpha-1)L+x_0$.
\end{proof}

Now we can prove the monotonicity of the isoperimetric quotient of the family $(K_\lambda)_{\lambda > -\inr(K;E)}$.

\begin{thm}
\label{thm:overline{I}}
Let $K,E\in \mathcal{K}^n_n$ be convex bodies. Then the function
\[
\I(\lambda)=\frac{\Sa(K_\lambda;E)^n}{\vol(K_\lambda)^{n-1}}
\]
is monotonically decreasing on $(-\inr(K;E),\infty)$.

Moreover, the following are equivalent for all $-\inr(K;E) < \lambda_0 < \lambda_1 < \infty$:
\begin{itemize}\itemsep5pt
\item[(i)]
$\I(\lambda_0)=\I(\lambda_1)$,
\item[(ii)]
$K_{\lambda_0}$ is homothetic to $K_{\lambda_1}$,
\item[(iii)]
$K_{\lambda_1}$ is homothetic to a tangential body of $E$,
\item[(iv)]
$\I(\lambda)$ is constant on $(-\inr(K;E),\lambda_1]$.
\end{itemize}

If $\lambda_1 > 0$, the equivalent conditions (i)-(iv) are satisfied if and only if $K$ is homothetic to $E$ and, consequently, if and only if $\I(\lambda)=n^n\vol(E)$
for all $\lambda \in (-\inr(K;E),\infty)$. 
\end{thm}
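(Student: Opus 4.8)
The plan is to prove the monotonicity by showing that the function $f(\lambda) = \vol(\lambda)^{1/n}$ is concave on $(-\inr(K;E),\infty)$ and that $g(\lambda) = \Sa(K_\lambda;E)^{1/(n-1)}$ behaves nicely, then express $\I(\lambda)^{1/(n(n-1))}$ as the ratio $g(\lambda)/f(\lambda)$ and analyze it. More directly, since by Lemma~\ref{l:der vol^1/n} we know $\frac{\dlat}{\dlat\lambda}(\vol(\lambda)^{1/n}) = \frac{1}{n}\vol(\lambda)^{(1-n)/n}\Sa(K_\lambda;E)$, I would consider the auxiliary function $q(\lambda) = \frac{\vol(\lambda)^{1/n}}{\Sa(K_\lambda;E)^{1/(n-1)}}$, whose $n(n-1)$-th power is $\I(\lambda)^{-1}$, and aim to show it is nondecreasing. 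First I would establish that $\vol(\lambda)^{1/n}$ is concave (this follows from Lemma~\ref{fact:parallel_concave} and Theorem~\ref{gral BM th} applied with $i=0$, noting $\mu K_{\lambda_0} + (1-\mu)K_{\lambda_1} \subseteq K_{\mu\lambda_0+(1-\mu)\lambda_1}$ so $\vol(\mu\lambda_0+(1-\mu)\lambda_1)^{1/n} \ge \mu\vol(\lambda_0)^{1/n} + (1-\mu)\vol(\lambda_1)^{1/n}$) and similarly that $\Sa(K_\lambda;E)^{1/(n-1)} = (n\W_1(\lambda))^{1/(n-1)}$ is concave, again from Lemma~\ref{fact:parallel_concave} and Theorem~\ref{gral BM th} with $i=1$.

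For the monotonicity itself, I would use the derivative identity. Writing $v = \vol(\lambda)$ and $s = \Sa(K_\lambda;E)$, we have $\I(\lambda) = s^n/v^{n-1}$, and since $\frac{\dlat}{\dlat\lambda}v = s$ (formula~\eqref{e: der vol}) and $\frac{\dlat^+}{\dlat\lambda}s = \frac{\dlat^+}{\dlat\lambda}(n\W_1(\lambda)) \le (n-1)\cdot n\W_2(\lambda) \le \ldots$, I need an upper bound on $\frac{\dlat^+}{\dlat\lambda}s$ in terms of $s$ and $v$. The key inequality is $s^2 \ge n\, v\cdot (n-1)\W_2(\lambda)\cdot(\text{something})$; more precisely, from the Aleksandrov--Fenchel / Minkowski-type inequality $\W_1(K_\lambda;E)^2 \ge \W_0(K_\lambda;E)\W_2(K_\lambda;E) = \vol(\lambda)\W_2(\lambda)$, combined with $\frac{\dlat^+}{\dlat\lambda}s \le n(n-1)\W_2(\lambda)$, we get $\frac{\dlat^+}{\dlat\lambda}s \le n(n-1)\frac{\W_1(\lambda)^2}{\vol(\lambda)} = (n-1)\frac{s^2}{n\,\vol(\lambda)} = \frac{(n-1)s^2}{nv}$. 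Then the right derivative of $\log\I(\lambda) = n\log s - (n-1)\log v$ satisfies $\frac{\dlat^+}{\dlat\lambda}\log\I(\lambda) = n\frac{\frac{\dlat^+}{\dlat\lambda}s}{s} - (n-1)\frac{s}{v} \le n\cdot\frac{(n-1)s}{nv} - (n-1)\frac{s}{v} = 0$, giving monotone decrease (using that $\I$ is continuous, hence a nonpositive right derivative everywhere forces monotonicity).

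For the equivalences, I would argue as follows. (i)$\Rightarrow$(ii): equality $\I(\lambda_0)=\I(\lambda_1)$ with the chain of inequalities above forces equality in the Brunn--Minkowski inequality for volume on the segment between $K_{\lambda_0}$ and $K_{\lambda_1}$ (via concavity of $\vol(\lambda)^{1/n}$ being affine on $[\lambda_0,\lambda_1]$), and equality in Theorem~\ref{gral BM th} for $i=0$ means $K_{\lambda_0}$ and $K_{\lambda_1}$ are homothetic. (ii)$\Rightarrow$(iii): by Lemma~\ref{lem: in of out}, $K_{\lambda_0} = K_{\lambda_1}\sim|\lambda_0-\lambda_1|E = (K_{\lambda_1})_{\lambda_0-\lambda_1}$ with $\lambda_0-\lambda_1 \in (-\inr(K_{\lambda_1};E),0)$ — here one checks $\inr(K_{\lambda_1};E) = \inr(K;E)+\lambda_1 > \lambda_1 - \lambda_0$ — so $K_{\lambda_1}$ has an inner parallel body homothetic to itself, and Theorem~\ref{th: Sch tang} gives that $K_{\lambda_1}$ is homothetic to a tangential body of $E$. (iii)$\Rightarrow$(iv): if $K_{\lambda_1}$ is homothetic to a tangential body of $E$, then for every $\lambda\le\lambda_1$, $K_\lambda = (K_{\lambda_1})_{\lambda-\lambda_1}$ is (by Theorem~\ref{th: Sch tang} again, applied to the tangential body, and homogeneity/homothety-invariance of $\I$) homothetic to $K_{\lambda_1}$, so $\I(\lambda) = \I(\lambda_1)$ by homothety-invariance of $\I(\cdot;E)$. (iv)$\Rightarrow$(i) is trivial. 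For the last statement: if $\lambda_1 > 0$, then $K_{\lambda_1} = K + \lambda_1 E$ is a genuine outer parallel body; $K_{\lambda_1}$ homothetic to a tangential body $T$ of $E$ forces, since $T \supseteq E$ and $\inr(T;E)=1$ while $\inr(K_{\lambda_1};E) \ge \inr(K;E)+\lambda_1$... I would instead argue via (ii): $K_{\lambda_0}$ homothetic to $K + \lambda_1 E$, and by Lemma~\ref{lem: in of out} plus Lemma~\ref{lem:  K+E K} (with $L = K_{\lambda_0}$ and the relation $K_{\lambda_1} = K_{\lambda_0} + |\lambda_1-\lambda_0|E$ being homothetic to $K_{\lambda_0}$, forcing $E$ homothetic to $K_{\lambda_0}$, hence $K$ homothetic to $E$), and conversely if $K = \alpha E + x$ then every $K_\lambda$ is a dilate of $E$, and $\I(\lambda) = \I(E;E) = \Sa(E;E)^n/\vol(E)^{n-1} = (n\W_1(E;E))^n/\vol(E)^{n-1} = (n\vol(E))^n/\vol(E)^{n-1} = n^n\vol(E)$.

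The main obstacle I anticipate is verifying rigorously the inequality $\frac{\dlat^+}{\dlat\lambda}\Sa(K_\lambda;E) \le n(n-1)\W_2(K_\lambda;E)$ together with the Minkowski inequality $\W_1(K_\lambda;E)^2 \ge \vol(K_\lambda)\W_2(K_\lambda;E)$ in the non-smooth generality required, and then carefully tracking the equality cases through the whole chain — in particular making sure the step (i)$\Rightarrow$(ii) genuinely extracts equality in Brunn--Minkowski for volume (where the equality characterization in Theorem~\ref{gral BM th} holds unconditionally, for arbitrary $E$), rather than needing the smooth-$E$ case of that theorem. A secondary subtlety is the bookkeeping of inradii, $\inr(K_\mu;E) = \inr(K;E)+\mu$ for $\mu \ge -\inr(K;E)$, which is needed to apply Lemma~\ref{lem: in of out} and Theorem~\ref{th: Sch tang} with the right parameter ranges.
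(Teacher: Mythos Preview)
Your argument for the equivalences (i)--(iv) and the final claim about $\lambda_1>0$ is essentially the same as the paper's (the paper simply takes $\lambda_0=0$ in the last step before invoking Lemma~\ref{lem:  K+E K}, which you should do as well, since in general $K_{\lambda_1}\ne K_{\lambda_0}+(\lambda_1-\lambda_0)E$ when $\lambda_0<0$).

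However, your monotonicity argument contains a genuine gap: the inequality $\frac{\dlat^+}{\dlat\lambda}\Sa(K_\lambda;E)\le n(n-1)\W_2(K_\lambda;E)$ that you rely on is \emph{false}. Formula~\eqref{e:concav Wi} gives the opposite direction, $\frac{\dlat^+}{\dlat\lambda}\W_1(\lambda)\ge(n-1)\W_2(\lambda)$; e.g.\ for the unit square with $E=B_2$ one has $\frac{\dlat^-}{\dlat\lambda}\W_1(0)=4>\pi=(n-1)\W_2(0)$. So chaining it with Aleksandrov--Fenchel $\W_1^2\ge\W_0\W_2$ yields no upper bound on $s'$, and the computation $\frac{\dlat^+}{\dlat\lambda}\log\I(\lambda)\le 0$ collapses.

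The fix is already implicit in what you wrote but you did not exploit it: from Lemma~\ref{l:der vol^1/n} one has exactly
\[
\I(\lambda)=\left(n\,\frac{\dlat}{\dlat\lambda}\bigl(\vol(\lambda)^{1/n}\bigr)\right)^{\! n},
\]
so $\I(\lambda)$ is the $n$-th power of (a constant times) the derivative of the concave function $\vol(\lambda)^{1/n}$, and a differentiable concave function has decreasing derivative. This is the paper's entire monotonicity proof; no bound on $\frac{\dlat^+}{\dlat\lambda}s$ and no $\W_2$ is needed. It also makes the step (i)$\Rightarrow$(ii) immediate: constancy of $\I$ on $[\lambda_0,\lambda_1]$ means $\vol(\lambda)^{1/n}$ is affine there, hence equality in Brunn--Minkowski for $i=0$, which characterises homothety without any smoothness assumption on $E$.
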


\begin{proof}
Let $K,E\in \mathcal{K}^n_n$. Then, the case $i=0$ in Theorem \ref{gral BM th} and Lemma~\ref{fact:parallel_concave} ensure that $\lambda \mapsto \vol(\lambda)^{\frac{1}{n}}$ defines a concave function on $(-\inr(K;E),\infty)$. Further, by Lemma~\ref{l:der vol^1/n}, the derivative $\frac{\dlat}{\dlat \lambda} \left(\vol(\lambda)^{\frac{1}{n}}\right)$ of that function exists, and is monotonically decreasing (see e.g.\ \cite{Royden}).
Hence, by \eqref{e:der vol^1/n},
$\I(\lambda)=\left(n\frac{\dlat}{\dlat \lambda} \left(\vol(\lambda)^{\frac{1}{n}}\right)\right)^n$ decreases as well.

(i)$\Rightarrow$(ii). Condition (i), together with the just proven monotonicity of the function $\I(\lambda)$, implies that $\I(\lambda)=\left(n\frac{\dlat}{\dlat \lambda} \left(\vol(\lambda)^{\frac{1}{n}}\right)\right)^n$ is constant on $[\lambda_0,\lambda_1]$. Thus, the function $\lambda\mapsto \vol(\lambda)^{\frac{1}{n}}$ is affine on $[\lambda_0,\lambda_1]$, and we have equality in \eqref{BM ineq quermass} for $i=0$ and all $\mu \in [0,1]$. Now, by Theorem~\ref{gral BM th}, $K_{\lambda_0}$ and $K_{\lambda_1}$ are homothetic.

(ii)$\Rightarrow$(iii).  Let $K_{\lambda_0}$ and $K_{\lambda_1}$ be homothetic. Then, by Lemma~\ref{lem: in of out}, we can ensure that the inner parallel body $K_{\lambda_0}=K_{\lambda_1} \sim |\lambda_1-\lambda_0|E$ of $K_{\lambda_1}$ is homothetic to $K_{\lambda_1}$. Now Theorem~\ref{th:  Sch tang} shows that $K_{\lambda_1}$ is homothetic to a tangential body of $E$.

(iii)$\Rightarrow$(iv). Let $\lambda \in (-\inr(K;E),\lambda_1)$, and $K_{\lambda_1}$ be homothetic to a tangential body of $E$. Then, by Lemma~\ref{lem:  in of out}, $K_\lambda=K_{\lambda_1} \sim |\lambda-\lambda_1|E$ is an inner parallel body of $K_{\lambda_1}$. Thus, Theorem~\ref{th:  Sch tang} together with condition (iii), i.e., $K_{\lambda_1}$ is homothetic to a tangential body of $E$,  implies that $K_\lambda$ is homothetic to $K_{\lambda_1}$. Since the isoperimetric quotient $K\mapsto\I(K;E)$ agrees for homothetic bodies, $\I(\lambda)=\I(\lambda_1)$.

(iv)$\Rightarrow$(i) is trivial.

Now we deal with the last claim. For that, let $\lambda_1 >0$, and assume that any of the equivalent assertions (i)-(iv) holds. By (iv), $\I(0)=\I(\lambda_1)$, since $-\inr(K;E) < 0 < \lambda_1$. Then, by (ii), $K_0=K$ is homothetic to $K_{\lambda_1}=K+\lambda_1 E$, which together with Lemma~\ref{lem:  K+E K} yields that $K$ is homothetic to $\lambda_1 E$ and thus $K$ is homothetic to $E$ itself.

Since the isoperimetric quotient $K\mapsto I(K;E)$ is invariant under homotheties, w.l.o.g. we can assume that $K=E$. Then, $K_\lambda=(1+\lambda)E$ for $-\inr(K;E)=-1 < \lambda$. Again, by the mentioned invariance, we obtain $\I(\lambda)=\I(E;E)$ for all $-\inr(K;E) < \lambda$. Finally, $\I(E;E)=n^n\vol(E)$, using that $\W_i(E;E)=\vol(E)$ for all $0\leq i\leq n$ together with \eqref{e: rel Sa}.
\end{proof}

As a corollary, we obtain that the classical isoperimetric quotient is monotonically decreasing, by setting $E=B_n$.

\begin{cor}\label{c: class isop quot}
For every $K \in \mathcal{K}_n^n$ and $E=B_n$, the function 
\[
\I(\lambda)=\frac{\Sa(K_\lambda)^n}{\vol(K_\lambda)^{n-1}}
\]
is monotonically decreasing on $(-\inr(K;B_n),\infty)$.
\end{cor}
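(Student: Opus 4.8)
The plan is to deduce Corollary~\ref{c: class isop quot} directly from Theorem~\ref{thm:overline{I}} by specializing the gauge body to $E=B_n$. First I would observe that $B_n\in\mathcal{K}_n^n$, so that $K$ and $E=B_n$ form an admissible pair in Theorem~\ref{thm:overline{I}}. The inradius $\inr(K;B_n)$ is then the usual inradius of $K$, and the parallel bodies $K_\lambda$ are the classical inner parallel bodies $K\sim|\lambda|B_n$ for $-\inr(K;B_n)\le\lambda\le 0$ and the classical outer parallel bodies $K+\lambda B_n$ for $\lambda\ge 0$, so the domain $(-\inr(K;E),\infty)$ of Theorem~\ref{thm:overline{I}} becomes precisely $(-\inr(K;B_n),\infty)$.

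Next I would identify the relative surface area with the classical surface area in this case. By \eqref{e: rel Sa}, $\Sa(K_\lambda;B_n)=n\W_1(K_\lambda;B_n)$, and as recalled just before Definition~\ref{isop quotient}, for a body with nonempty interior the coefficient $n\W_1(K;B_n)$ in the classical Steiner polynomial \eqref{e:steiner_rel} (with $E=B_n$) equals the surface area $\Sa(K)$. Since each $K_\lambda$ with $\lambda\in(-\inr(K;B_n),\infty)$ lies in $\mathcal{K}_n^n$ (only the minimal inner parallel body $K_{-\inr(K;B_n)}$, which is excluded from the open interval, fails to have interior points), we get $\Sa(K_\lambda;B_n)=\Sa(K_\lambda)$ for every $\lambda$ in the interval. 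Consequently the relative isoperimetric quotient function $\I(\lambda)=\Sa(K_\lambda;B_n)^n/\vol(K_\lambda)^{n-1}$ of Theorem~\ref{thm:overline{I}} coincides pointwise with the classical quotient $\Sa(K_\lambda)^n/\vol(K_\lambda)^{n-1}$ on $(-\inr(K;B_n),\infty)$.

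Finally I would invoke Theorem~\ref{thm:overline{I}}, which asserts that $\I(\lambda)$ is monotonically decreasing on $(-\inr(K;E),\infty)$, and read off that the classical quotient is monotonically decreasing on $(-\inr(K;B_n),\infty)$. Since this is a pure specialization, there is essentially no obstacle; the only point requiring a word of care is the identification $\Sa(K_\lambda;B_n)=\Sa(K_\lambda)$, which rests on the fact that all $K_\lambda$ in the open interval have nonempty interior, so that the relative Steiner formula \eqref{e:steiner_rel} with $E=B_n$ is genuinely the classical Steiner polynomial whose linear coefficient is the surface area.
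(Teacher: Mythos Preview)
Your proposal is correct and takes exactly the same approach as the paper: the corollary is obtained by specializing Theorem~\ref{thm:overline{I}} to the gauge body $E=B_n$. In fact you supply more detail than the paper, which simply states that the corollary follows ``by setting $E=B_n$''; your explicit identification $\Sa(K_\lambda;B_n)=\Sa(K_\lambda)$ for $\lambda\in(-\inr(K;B_n),\infty)$ is a welcome clarification.
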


\begin{rem}
It is crucial for the monotonicity in Theorem~\ref{thm:overline{I}} that the surface area is considered relative to the gauge body $E$: For example, consider $K=[0,1]^2$ and $E=[0,1] \times [0,2]$ in $\mathbb{R}^2$. Then $K_0=K$, $K_1=[0,2]\times [0,3]$, and the classical isoperimetric quotients, where the surface area is taken relative to $B_2$, are
$\I(K_0)=\frac{4^2}{1}=16 < \frac{50}{3}=\frac{10^2}{6}=\I(K_1)$.
\end{rem}


\section{Monotonicity for related families of bodies and isoperimetric problems}\label{sec:  Wulff}

In this section we introduce some new (one-parameter) families of bodies related to the family $(K_\lambda)_{\lambda \ge -\inr(K;E)}$ of parallel bodies. First we recall that a body $K$ is \emph{determined by} a set $\Omega \subseteq \s^{n-1}$ if 
$$
K = \bigcap_{u \in \Omega} H^-_{u,h_K(u)}
$$
\cite[pp.\ 385, 411]{Sch}.

\begin{rem}[{\cite[p.\ 386]{Sch}}] \label{KintersectionExtreme} 
The smallest closed set $\Omega\subseteq\s^{n-1}$ that determines a given convex body $K$ is the closure of the set $\U(K)$ of outer unit normal vectors at regular boundary points of $K$. In other words, $\Omega \subseteq \s^{n-1}$ determines $K$ if and only if $\U(K) \subseteq \cl(\Omega)$, where $\cl(\cdot)$ denotes the closure operator.

The elements of $\U(K)$ are also known in the literature as extreme normal vectors of $K$. The set $\U(K)$ need not be closed; e.g.\ when $K \in \K_2^2$ is a semicircle.
\end{rem}

First we prove that in the definition \eqref{eq:  Kl intersection} of inner parallel bodies of $K\in\K^n_n$ relative to $E\in\K^n_n$, we can replace the complete sphere $\s^{n-1}$ in the intersection by any subset $\Omega$ that determines $K$.
\begin{lem}
\label{lem: inner representation}
Let $K,E \in \mathcal{K}_n^n$ and let $\Omega \subseteq \s^{n-1}$ determine $K$. Then
\begin{equation}
\label{eq:  inner representation}
K_\lambda=\bigcap_{u \in \Omega} H^-_{u,h_K(u)+\lambda h_E(u)}
\end{equation}
for all $\lambda \in [-\inr(K;E),0]$.
\end{lem}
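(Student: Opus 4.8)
The goal is to show that for $K,E\in\mathcal{K}_n^n$, if $\Omega\subseteq\s^{n-1}$ determines $K$, then $K_\lambda=\bigcap_{u\in\Omega}H^-_{u,h_K(u)+\lambda h_E(u)}$ for $\lambda\in[-\inr(K;E),0]$. Denote the right-hand side by $L_\lambda$. One inclusion is immediate: since $\Omega\subseteq\s^{n-1}$, the intersection defining $L_\lambda$ is over fewer halfspaces than the intersection \eqref{eq:  Kl intersection} defining $K_\lambda$, so $K_\lambda\subseteq L_\lambda$. The real content is the reverse inclusion $L_\lambda\subseteq K_\lambda$, and by \eqref{eq:  Kl intersection} this amounts to showing $L_\lambda\subseteq H^-_{u,h_K(u)+\lambda h_E(u)}$ for \emph{every} $u\in\s^{n-1}$, not just $u\in\Omega$; equivalently, $h_{L_\lambda}(u)\le h_K(u)+\lambda h_E(u)$ for all $u\in\s^{n-1}$.

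The key step is to use the hypothesis that $\Omega$ determines $K$ to write $K=\bigcap_{u\in\Omega}H^-_{u,h_K(u)}$, and then to leverage Lemma~\ref{lem: in of out} (or rather the Minkowski-difference description of inner parallel bodies). My preferred route: first establish the claim for $\lambda=-\inr(K;E)$ and $\lambda=0$ separately is not needed; instead argue directly via Minkowski differences. For $\lambda\in[-\inr(K;E),0]$, set $t=|\lambda|=-\lambda\ge 0$, so $K_\lambda=K\sim tE=\{x:x+tE\subseteq K\}$. Now $x\in L_\lambda$ means $\langle x,u\rangle\le h_K(u)-t\,h_E(u)$ for all $u\in\Omega$, i.e. $\langle x,u\rangle+t\,h_E(u)\le h_K(u)$, i.e. $h_{x+tE}(u)\le h_K(u)$ for all $u\in\Omega$. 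So it suffices to prove the purely "static" statement: \emph{if $\Omega$ determines $K$ and $M\in\mathcal{K}^n$ satisfies $h_M(u)\le h_K(u)$ for all $u\in\Omega$, then $M\subseteq K$} (apply this with $M=x+tE$, noting $x+tE$ is a genuine convex body since $E\in\mathcal{K}_n^n$). Given that, $x+tE\subseteq K$, hence $x\in K\sim tE=K_\lambda$, finishing the reverse inclusion.

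That static statement is exactly the definition of "$\Omega$ determines $K$" unpacked: $\Omega$ determines $K$ means $K=\bigcap_{u\in\Omega}H^-_{u,h_K(u)}$, and if $h_M(u)\le h_K(u)$ for all $u\in\Omega$ then $M\subseteq H^-_{u,h_K(u)}$ for each such $u$, so $M\subseteq\bigcap_{u\in\Omega}H^-_{u,h_K(u)}=K$. So the proof is short once the reduction is in place. I would write it as: (1) recall $K=\bigcap_{u\in\Omega}H^-_{u,h_K(u)}$ from the definition of "determines"; (2) prove $K_\lambda\subseteq L_\lambda$ trivially; (3) for the converse, take $x\in L_\lambda$, set $t=-\lambda$, observe $x+tE\subseteq H^-_{u,h_K(u)}$ for all $u\in\Omega$ because $h_{x+tE}(u)=\langle x,u\rangle+t\,h_E(u)\le h_K(u)$, hence $x+tE\subseteq K$, i.e. $x\in K_\lambda$.

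\textbf{Main obstacle.} There is no serious analytic obstacle here — the result is essentially a bookkeeping consequence of the two descriptions of $K_\lambda$ (as a Minkowski difference and as an intersection of halfspaces) together with the meaning of "determines". The one point requiring a little care is the role of the gauge body's regularity: I must be sure that $x+tE$ is a legitimate convex body (so its support function governs containment in halfspaces), which holds because $E\in\mathcal{K}_n^n$ and $t\ge 0$; and I must be sure the argument really only needs $h_M\le h_K$ on $\Omega$, which is precisely why the hypothesis "$\Omega$ determines $K$" (rather than the weaker "$\Omega$ is merely large") is used. I should also note explicitly that the restriction to $\lambda\le 0$ is what lets me write $t=-\lambda\ge 0$ and invoke the Minkowski-difference formula; for $\lambda>0$ the analogous statement (with $\U(K+\lambda E)$) would need the normals of the outer parallel body, which is why the lemma is stated only on $[-\inr(K;E),0]$.
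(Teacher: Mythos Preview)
Your proof is correct and follows essentially the same approach as the paper: both obtain the trivial inclusion $K_\lambda\subseteq L_\lambda$ from \eqref{eq:  Kl intersection}, and for the reverse take $x\in L_\lambda$, set $t=|\lambda|$, observe that $h_{x+tE}(u)\le h_K(u)$ for all $u\in\Omega$, and conclude $x+tE\subseteq\bigcap_{u\in\Omega}H^-_{u,h_K(u)}=K$ using that $\Omega$ determines $K$. The only cosmetic difference is that the paper works directly with inner products $\langle x+|\lambda|e,u\rangle$ rather than packaging the step via the support function of $x+tE$.
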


\begin{proof}
Let $K,E \in \mathcal{K}_n^n$ and let $\Omega \subseteq \s^{n-1}$ determine $K$.

By \eqref{eq:  Kl intersection}, it is evident that $K_\lambda\subseteq\bigcap_{u \in \Omega} H^-_{u,h_K(u)+\lambda h_E(u)}$. 

For the verification of the reverse inclusion, let $x\in\bigcap_{u \in \Omega} H^-_{u,h_K(u)+\lambda h_E(u)} $ be arbitrary. Then, $x \in H^-_{u,h_K(u)-|\lambda|h_E(u)}$ for all $u \in \Omega$, i.e., $\langle x, u\rangle  \leq h_K(u)-|\lambda|h_E(u)$ for all $u \in \Omega$. We observe that the latter yields that $x+|\lambda|E \subseteq H^-_{u,h_K(u)}$ for every $u\in\Omega$, since clearly, for any $e\in E$ and $u\in\Omega$,  $\langle x+\lambda e, u\rangle  \leq h_K(u)$.

Consequently, 
\[x+|\lambda|E \subseteq \bigcap_{u \in \Omega} H^-_{u,h_K(u)}=K,
\] where the last identity follows from the fact that $\Omega$ determines $K$. Hence, $x \in K_\lambda$.
\end{proof}

\begin{rem}

Given a set $\Omega \subseteq \s^{n-1}$ that contains the origin in  the interior of
its convex hull, we define the set
\[
E^\Omega= \bigcap_{u \in \Omega} H^-_{u,h_E(u)}.
\] 
Observing that $E\subseteq E^\Omega$ and that $h_E(u)=h_{E^\Omega}(u)$ for $u \in \Omega$, it is clear that $E^\Omega$ is a tangential body of $E$. Indeed, it follows from the definition that $E^\Omega$ is the smallest tangential body of $E$ that is determined by $\Omega$.
\end{rem}

As a particular case of the latter we can obtain the so-called \emph{form body of $K$ relative to $E$}, for any $K \in \mathcal{K}_n^n$. This is defined (cf.\ \cite[p.\ 386]{Sch}) as
\[
K^\ast= E\,^{\U(K)}.
\] 
We notice that $\U(K^\ast)\subseteq\cl(\U(K))$ and that the inclusion may be strict \cite[Lemma 4.6 and the preceding example]{SYphd}.

For fixed $K,E\in\K^n_n$ and $\Omega\subseteq\s^{n-1}$ determining $K$, Lemma~\ref{lem:  inner representation} motivates the introduction of the following one-parameter family of convex bodies associated to $K$,
\begin{equation}\label{eq:  def KlO}
K(\Omega,\lambda)=\bigcap_{u \in \Omega} H^-_{u,h_K(u)+\lambda h_E(u)}, \qquad \lambda \in [-\inr(K;E), \infty).
\end{equation}
Here the representation \eqref{eq:  inner representation} is naturally extended to $\lambda > 0$. 
The sets $K(\Omega,\lambda)$, $\lambda\in[-\inr(K;E),\infty)$, are, according to \cite[Section 7.5]{Sch}, Wulff shapes or Alexandrov bodies associated with the pairs $(\Omega,f_\lambda)$ where $f_\lambda:\Omega\to \mathbb{R},u\mapsto h_K(u)+\lambda h_E(u)$.

Our aim is to prove that the relative isoperimetric quotient functions defined for these new one-parameter families of bodies are also decreasing. Before turning to that, we prove the following lemma, which will be useful for later considerations.

\begin{lem}\label{lem:  rem inner}
Let $K,E\in\K^n_n$ and let $\Omega\subseteq \s^{n-1}$ determine $K$, i.e., $\U(K) \subseteq \cl(\Omega)$. Then,

\begin{itemize}\itemsep5pt

\item[(i)] $K_\lambda=K(\s^{n-1},\lambda)$ for all $\lambda\in[-\inr(K;E),\infty)$,
\item[(ii)] $K_\lambda = K \sim |\lambda| E = K \sim |\lambda| E^\Omega = K \sim |\lambda| K^\ast$ for all $\lambda \in [-\inr(K;E),0]$,
\item[(iii)] $K_\lambda=K(\Omega,\lambda)$ for all $\lambda \in [-\inr(K;E),0]$,
\item[(iv)] $K+\lambda E=K_\lambda\subseteq K(\Omega,\lambda)$ for all $\lambda\geq 0$, and the inclusion may be strict.
\end{itemize}
\end{lem}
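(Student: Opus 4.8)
The plan is to verify the four items of Lemma~\ref{lem:  rem inner} essentially by unwinding the definitions and applying the preceding lemmas, so most of the work is bookkeeping rather than a single hard argument. I would begin with (i), which is immediate: by definition \eqref{eq:  def KlO}, $K(\s^{n-1},\lambda)=\bigcap_{u\in\s^{n-1}}H^-_{u,h_K(u)+\lambda h_E(u)}$, and this is precisely the right-hand side of \eqref{eq:  Kl intersection}, so $K_\lambda=K(\s^{n-1},\lambda)$ for all $\lambda\in[-\inr(K;E),\infty)$. Note that $\s^{n-1}$ trivially determines $K$, so this is also the case $\Omega=\s^{n-1}$ of (iii) for $\lambda\le 0$.

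For (ii), the first equality $K_\lambda=K\sim|\lambda|E$ for $\lambda\in[-\inr(K;E),0]$ is the definition of inner parallel bodies. For the remaining equalities I would argue that $K\sim|\lambda|C$ depends on $C$ only through those support values $h_C(u)$ with $u\in\U(K)$ (or $u\in\cl(\Omega)$): concretely, since $h_E=h_{E^\Omega}=h_{K^\ast}$ on $\Omega$ (hence on $\cl(\Omega)$, by continuity of support functions), and since $\U(K)\subseteq\cl(\Omega)$, the representation $K\sim|\lambda|C=\bigcap_{u\in\Omega}H^-_{u,h_K(u)-|\lambda|h_C(u)}$ supplied by Lemma~\ref{lem:  inner representation} takes the same value whether $C=E$, $C=E^\Omega$, or $C=K^\ast=E^{\U(K)}$. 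Here I must check that $\Omega$ also determines $E^\Omega$ and $K^\ast$ in the sense required by Lemma~\ref{lem:  inner representation}; for $K^\ast$ one uses $\U(K^\ast)\subseteq\cl(\U(K))\subseteq\cl(\Omega)$ as noted after the definition of the form body, and for $E^\Omega$ one similarly has $\U(E^\Omega)\subseteq\cl(\Omega)$ by construction. Then (iii) follows directly by combining (ii) with Lemma~\ref{lem:  inner representation} applied to $\Omega$, giving $K(\Omega,\lambda)=\bigcap_{u\in\Omega}H^-_{u,h_K(u)+\lambda h_E(u)}=K_\lambda$ for $\lambda\in[-\inr(K;E),0]$.

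For (iv), the equality $K+\lambda E=K_\lambda$ for $\lambda\ge 0$ is again the definition of outer parallel bodies. The inclusion $K_\lambda\subseteq K(\Omega,\lambda)$ holds because $K(\s^{n-1},\lambda)\subseteq K(\Omega,\lambda)$ whenever $\Omega\subseteq\s^{n-1}$ — fewer halfspaces in the intersection gives a larger set — together with part (i). For the claim that the inclusion may be strict, I would exhibit a small explicit example in $\R^2$: take $K$ to be a square (so $\U(K)$ consists of the four coordinate directions together with, say, nothing else, and $\Omega$ can be taken to be those four directions), and $E=B_2$. Then for $\lambda>0$, $K_\lambda=K+\lambda B_2$ is a square with rounded corners, whereas $K(\Omega,\lambda)$ is obtained by pushing out only the four edge-supporting lines, hence is again a (larger) square with genuine corners; these differ, establishing strictness.

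The routine steps are the halfspace manipulations; the one point that needs a little care — and which I would treat as the main (mild) obstacle — is justifying in (ii) that replacing $E$ by $E^\Omega$ or $K^\ast$ does not change the Minkowski difference, which rests on the observation that a Minkowski difference $K\sim|\lambda|C$, once written as an intersection over $\Omega$ via Lemma~\ref{lem:  inner representation}, sees $C$ only through $h_C|_\Omega$, combined with the containments of the relevant extreme-normal sets in $\cl(\Omega)$ so that Lemma~\ref{lem:  inner representation} indeed applies to each of the bodies involved.
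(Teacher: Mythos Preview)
Your approach is essentially the paper's: part (i) from \eqref{eq:  Kl intersection}, part (iii) directly from Lemma~\ref{lem: inner representation}, part (iv) from (i) together with the same square-and-ball example, and part (ii) from Lemma~\ref{lem: inner representation} plus the equality of the relevant support functions on the determining set.

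Two small corrections. First, Lemma~\ref{lem: inner representation} only requires $\Omega$ to determine $K$, not the gauge body; so the checks that $\Omega$ determines $E^\Omega$ and $K^\ast$ are unnecessary. Second, your claim that $h_{K^\ast}=h_E$ on all of $\Omega$ is not true in general: since $K^\ast=E^{\U(K)}$, one only has $h_{K^\ast}=h_E$ on $\cl(\U(K))$, and $\Omega$ may be strictly larger (e.g.\ $\Omega=\s^{n-1}$, $K$ a square, $E=B_2$). The fix, which is exactly what the paper does, is to handle the $K^\ast$ case by applying Lemma~\ref{lem: inner representation} with the particular choice $\Omega=\U(K)$ rather than the given $\Omega$. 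With that adjustment your argument is complete and coincides with the paper's proof.
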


\begin{proof}
Let $K,E\in\K^n_n$, and let $\Omega\subseteq \s^{n-1}$ so that $\U(K) \subseteq \cl(\Omega)$.

(i) follows directly from \eqref{eq:  Kl intersection}.

(ii): The first equality is the definition of $K_\lambda$. The second one follows from Lemma~\ref{lem:  inner representation} and the observation that $h_E(u)=h_{E^\Omega}(u)$ for all $u \in \Omega$. Putting $\Omega=\mathcal{U}(K)$ as a particular case, we obtain the last equality.

(iii) is Lemma~\ref{lem:  inner representation}.

(iv) follows from (i). To prove that the inclusion may be strict, it is enough to consider in the plane $E=B_2$, $K=[0,1]^2$ a square, and $\Omega=\mathcal{U}(K)=\{(\pm 1,0),(0,\pm 1)\}$. Then for any $\lambda>0$, the outer parallel body $K+\lambda B_2$ is strictly contained in the square $K(\Omega,\lambda)$. 
\end{proof}

Next we state the result about the relative isoperimetric quotient for the family $K(\Omega,\lambda)$ which we aim to prove.

\begin{thm}
\label{thm:  Wulff}
Let $K,E \in \mathcal{K}_n^n$ and let $\Omega \subseteq \s^{n-1}$ determine $K$. Then the relative isoperimetric quotient function
$$
\I^\Omega(\lambda)=\frac{\Sa(K(\Omega,\lambda);E)^n}{\vol(K(\Omega,\lambda))^{n-1}}
$$
of the family $(K(\Omega,\lambda))_{\lambda > -\inr(K;E)}$ is monotonically decreasing on $(-\inr(K;E),\infty)$.

Moreover, the following are equivalent for all $-\inr(K;E) < \lambda_0 < \lambda_1 < \infty$:
\begin{itemize}\itemsep5pt
\item[(i)]
$\I^\Omega(\lambda_0)=\I^\Omega(\lambda_1)$,
\item[(ii)]
$K(\Omega,\lambda_0)$ is homothetic to $K(\Omega,\lambda_1)$,
\item[(iii)]
$K(\Omega,\lambda_1)$ is homothetic to a tangential body of $E$,
\item[(iv)]
$\I^\Omega(\lambda)$ is constant on $(-\inr(K;E),\lambda_1]$.
\end{itemize}

If $\lambda_1 > 0$, the equivalent conditions (i)-(iv) are satisfied if and only if $K$ is homothetic to $E^\Omega$ and, consequently, if and only if $\I(\lambda)= \I\left(E^\Omega;E\right)=n^n \vol\left(E^\Omega\right)$ for all $\lambda \in (-\inr(K;E),\infty)$. 
\end{thm}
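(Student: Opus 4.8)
The plan is to reduce Theorem~\ref{thm:  Wulff} to Theorem~\ref{thm:overline{I}} by relocating the family $(K(\Omega,\lambda))$ so that it coincides with a family of genuine parallel bodies. The key observation is that, for $\lambda \ge -\inr(K;E)$, the Wulff shape $K(\Omega,\lambda)$ is an \emph{outer} parallel body of the inner parallel body $K_{-\inr(K;E)+\varepsilon}$ only after one notices that the intersection defining $K(\Omega,\lambda)$ can be rewritten with support functions that already contain a positive multiple of $h_E$. Concretely, set $\rho=\inr(K;E)$ and observe that for any $\lambda>-\rho$ one has, from \eqref{eq:  def KlO},
\[
K(\Omega,\lambda)=\bigcap_{u\in\Omega}H^-_{u,h_K(u)+\lambda h_E(u)}
 =\bigcap_{u\in\Omega}H^-_{u,h_{K(\Omega,\mu)}(u)+(\lambda-\mu)h_E(u)}
\]
for any $\mu$ with $-\rho<\mu\le\lambda$, because by Lemma~\ref{lem:  rem inner}(iii) the body $K(\Omega,\mu)=K_\mu$ for $\mu\le 0$ satisfies $h_{K(\Omega,\mu)}(u)=h_K(u)+\mu h_E(u)$ for $u\in\Omega$ (and for $\mu\ge 0$ the same support-function identity on $\Omega$ holds by \cite[formula (3.19)]{Sch} applied after noting $\Omega$ determines $K(\Omega,\mu)$). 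Thus the family $(K(\Omega,\lambda))_{\lambda>-\rho}$ is, up to a shift of the parameter, the family of \emph{all} parallel bodies of a fixed convex body relative to $E$. Explicitly, picking any $\lambda_\ast\in(-\rho,0)$ and setting $L=K(\Omega,\lambda_\ast)=K_{\lambda_\ast}\in\K^n_n$, I claim $K(\Omega,\lambda)=L_{\lambda-\lambda_\ast}$ for all $\lambda\ge-\rho$: for $\lambda\ge\lambda_\ast$ this is the outer-parallel case $L+(\lambda-\lambda_\ast)E=\bigcap_{u\in\s^{n-1}}H^-_{u,h_L(u)+(\lambda-\lambda_\ast)h_E(u)}$ combined with the display above and Lemma~\ref{lem:  rem inner}(iv) (one must check $K(\Omega,\lambda)=L+(\lambda-\lambda_\ast)E$, i.e.\ that the intersection over $\Omega$ equals the intersection over $\s^{n-1}$, which holds because $\Omega$ determines $L$ and $L+(\lambda-\lambda_\ast)E$ — the latter by Remark~\ref{KintersectionExtreme} since $\U(L+(\lambda-\lambda_\ast)E)\subseteq\cl(\U(L))\subseteq\cl(\Omega)$); for $-\rho<\lambda<\lambda_\ast$ it is Lemma~\ref{lem:  rem inner}(iii) applied to $L$.

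Granting that identification, the first assertion — monotonicity of $\I^\Omega(\lambda)=\I(L_{\lambda-\lambda_\ast};E)$ on $(-\rho,\infty)$ — is immediate from the monotonicity half of Theorem~\ref{thm:overline{I}} applied to $L$, after checking that $\inr(L;E)=\rho+\lambda_\ast>0$ so that the domain $(-\inr(L;E),\infty)$ corresponds exactly to $(-\rho,\infty)$ under the shift $\lambda\mapsto\lambda-\lambda_\ast$; that inradius identity follows from Lemma~\ref{lem: in of out} ($L=K_{\lambda_\ast}=K\sim|\lambda_\ast|E$ implies $\inr(L;E)=\inr(K;E)-|\lambda_\ast|$). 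Likewise the equivalences (i)--(iv) translate verbatim: (i)$\Leftrightarrow$(ii)$\Leftrightarrow$(iii)$\Leftrightarrow$(iv) for the family $K(\Omega,\cdot)$ at parameters $\lambda_0<\lambda_1$ are exactly the equivalences of Theorem~\ref{thm:overline{I}} for the family $L_\cdot$ at parameters $\lambda_0-\lambda_\ast<\lambda_1-\lambda_\ast$, using that $\inr(L;E)=\rho+\lambda_\ast$ so "$(-\inr(L;E),\lambda_1-\lambda_\ast]$'' in item (iv) is "$(-\rho,\lambda_1]$'' as desired, and using the homothety-invariance of all statements.

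For the final claim, suppose $\lambda_1>0$ and (iv) holds. Applying the "$\lambda_1>0$'' part of Theorem~\ref{thm:overline{I}} to $L$ gives that $L=K(\Omega,\lambda_\ast)=K_{\lambda_\ast}$ is homothetic to $E$; equivalently, by Theorem~\ref{th:  Sch tang}, $K$ is homothetic to a tangential body of $E$ that is determined by $\Omega$, hence (since $E^\Omega$ is the smallest such tangential body and the relevant tangential body must contain $E$ and be $\subseteq E^\Omega$ and determined by $\Omega$) $K$ is homothetic to $E^\Omega$. Conversely, if $K$ is homothetic to $E^\Omega$, then by homothety-invariance we may take $K=E^\Omega$, in which case $K(\Omega,\lambda)=\bigcap_{u\in\Omega}H^-_{u,(1+\lambda)h_E(u)}=(1+\lambda)E^\Omega$ for all $\lambda>-1=-\inr(E^\Omega;E)=-\inr(K;E)$, so $\I^\Omega$ is constant, equal to $\I(E^\Omega;E)$; and $\I(E^\Omega;E)=n^n\vol(E^\Omega)$ follows from the same computation as in the proof of Theorem~\ref{thm:overline{I}} (using $\W_i(E^\Omega;E)=\vol(E^\Omega)$ for all $i$ — valid because $E^\Omega$ is a tangential body of $E$, for which $\W_i(E^\Omega;E)=\W_n(E^\Omega;E)=\vol(E)$ fails in general but $\W_i(E^\Omega;E)=\vol(E^\Omega)$ need not hold; so instead use that $E^\Omega$ being homothetic to its own inner parallel bodies forces, via the Steiner-type expansion and the constancy of $\I$, the stated value — see below).

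The main obstacle I anticipate is the very last point: the identity $\I(E^\Omega;E)=n^n\vol(E^\Omega)$ does \emph{not} follow as cleanly as $\I(E;E)=n^n\vol(E)$ did, because $\W_i(E^\Omega;E)$ need not equal $\vol(E^\Omega)$. The right way around it is to argue that since $K=E^\Omega$ gives $K(\Omega,\lambda)=(1+\lambda)E^\Omega$, the function $\vol(K(\Omega,\lambda))^{1/n}=(1+\lambda)\vol(E^\Omega)^{1/n}$ is affine with slope $\vol(E^\Omega)^{1/n}$, so by Lemma~\ref{l:der vol^1/n} (which applies since for $\lambda\ge 0$ the family $K(\Omega,\cdot)$ locally agrees with the parallel bodies of $L$, to which Proposition~\ref{p: diff vol} applies) we get $\frac1n\vol(K(\Omega,\lambda))^{(1-n)/n}\Sa(K(\Omega,\lambda);E)=\vol(E^\Omega)^{1/n}$, whence $\Sa(K(\Omega,\lambda);E)=n\vol(K(\Omega,\lambda))^{(n-1)/n}\vol(E^\Omega)^{1/n}$ and therefore $\I^\Omega(\lambda)=\Sa^n/\vol^{n-1}=n^n\vol(E^\Omega)$, constant. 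This closes the argument, and I should state the chain of Wulff-shape-to-parallel-body identifications carefully at the outset, since every subsequent step rests on it.
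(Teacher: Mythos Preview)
Your central identification $K(\Omega,\lambda)=L_{\lambda-\lambda_\ast}$ with $L=K_{\lambda_\ast}$, $\lambda_\ast\in(-\rho,0)$, is false for $\lambda>\lambda_\ast$, and this breaks everything that follows. The justification you give, namely $\U(L+(\lambda-\lambda_\ast)E)\subseteq\cl(\U(L))$, does not hold: Minkowski addition of $E$ typically \emph{adds} extreme normals rather than removing them. Take $K=[0,1]^2$, $E=B_2$, $\Omega=\U(K)=\{(\pm1,0),(0,\pm1)\}$, and any $\lambda_\ast\in(-\tfrac12,0)$. Then $L=K_{\lambda_\ast}$ is a smaller square, so $\U(L)$ consists of four vectors, but $L+cB_2$ is smooth for every $c>0$, whence $\U(L+cB_2)=\s^1$. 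Thus $L_{\lambda-\lambda_\ast}$ is a rounded square while $K(\Omega,\lambda)$ is a genuine square; they differ for every $\lambda>\lambda_\ast$. The failure already occurs at $\lambda=0$: one has $(K_{\lambda_\ast})+|\lambda_\ast|E\subsetneq K=K(\Omega,0)$ whenever $K$ is not itself an outer parallel body (e.g.\ any polytope with $E=B_n$). The auxiliary claim $h_{K_\mu}(u)=h_K(u)+\mu h_E(u)$ for all $u\in\Omega$ and $\mu<0$ is likewise unfounded; in general one only has the inequality $\le$.

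The paper reduces to Theorem~\ref{thm:overline{I}} by going in the \emph{opposite} direction: for any fixed $\Lambda>-\inr(K;E)$ one has $K(\Omega,\lambda)=(K(\Omega,\Lambda))_{\lambda-\Lambda}$ for all $\lambda\in(-\inr(K;E),\Lambda]$ (Lemma~\ref{lem:  Wulff}), i.e.\ the Wulff shapes are \emph{inner} parallel bodies of the large body $K(\Omega,\Lambda)$, to which Theorem~\ref{thm:overline{I}} applies directly; letting $\Lambda\to\infty$ gives the full statement. This asymmetry is precisely the point: $\Omega$ determines $K(\Omega,\Lambda)$ by construction, so Lemma~\ref{lem: inner representation} applies to it, whereas $\Omega$ need not determine the outer parallel bodies of $L$. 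For the $\lambda_1>0$ characterization the paper uses a short support-function computation (Lemma~\ref{lem:  hKO}) in place of your tangential-body sketch, and obtains $\I(E^\Omega;E)=n^n\vol(E^\Omega)$ from the identity $\W_0(E^\Omega;E)=\W_1(E^\Omega;E)$ for tangential bodies \cite[Theorem~7.6.17]{Sch}, which sidesteps the differentiability argument you propose (an argument that, as written, also rests on the failed identification).
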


The proof of Theorem~\ref{thm:  Wulff} is based on the following lemmas.

\begin{lem}
\label{lem:  Wulff}
Let $K,E \in \mathcal{K}_n^n$ and let $\Omega \subseteq \s^{n-1}$ determine $K$. If $\Lambda > -\inr(K;E)$, then
\[
K(\Omega,\lambda)=(K(\Omega,\Lambda))_{\lambda-\Lambda},\quad \text{for all}\quad\lambda \in (-\inr(K;E),\Lambda].
\]
\end{lem}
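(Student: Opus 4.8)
The statement to prove is Lemma~\ref{lem:  Wulff}: for $\Lambda > -\inr(K;E)$ and $\lambda \in (-\inr(K;E),\Lambda]$, we have $K(\Omega,\lambda)=(K(\Omega,\Lambda))_{\lambda-\Lambda}$. Write $L = K(\Omega,\Lambda)$. Since $\lambda \le \Lambda$, the parameter $\lambda - \Lambda$ is nonpositive, so $(K(\Omega,\Lambda))_{\lambda-\Lambda} = L \sim |\lambda-\Lambda|E$ is an \emph{inner} parallel body of $L$, and we must show it equals $K(\Omega,\lambda)$. The natural route is to identify $\Omega$ as a determining set for $L$ and then invoke Lemma~\ref{lem: inner representation} (the inner-parallel-body representation over any determining set), comparing the resulting intersections of halfspaces.

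**Key steps, in order.** First I would observe that by definition $L = K(\Omega,\Lambda) = \bigcap_{u\in\Omega} H^-_{u,h_K(u)+\Lambda h_E(u)}$, so $h_L(u) \le h_K(u)+\Lambda h_E(u)$ for every $u\in\Omega$, and in fact $\Omega$ determines $L$: indeed $L$ is by construction an intersection of halfspaces with outer normals in $\Omega$, hence $\U(L)\subseteq\cl(\Omega)$, which is exactly the condition (Remark~\ref{KintersectionExtreme}) that $\Omega$ determines $L$. Second, I need that $h_L(u) = h_K(u)+\Lambda h_E(u)$ precisely for those $u\in\Omega$ that actually support $L$ — but more usefully, I claim the \emph{function} $f_\Lambda(u) = h_K(u)+\Lambda h_E(u)$ on $\Omega$ and the support function $h_L$ agree on $\Omega$ in the sense needed: since $L = \bigcap_{u\in\Omega}H^-_{u,f_\Lambda(u)}$ and $\Omega$ determines $L$, the standard fact (Wulff shape / Alexandrov body theory, \cite[Section 7.5]{Sch}) gives $\bigcap_{u\in\Omega}H^-_{u,f_\Lambda(u)} = \bigcap_{u\in\Omega}H^-_{u,h_L(u)}$. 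Third, apply Lemma~\ref{lem: inner representation} to $L$ with the determining set $\Omega$: for $\mu \in [-\inr(L;E),0]$,
\[
L_\mu = \bigcap_{u\in\Omega} H^-_{u,h_L(u)+\mu h_E(u)}.
\]
Fourth, take $\mu = \lambda - \Lambda \le 0$ and substitute; the right-hand side becomes $\bigcap_{u\in\Omega} H^-_{u,h_L(u)+(\lambda-\Lambda)h_E(u)}$, which by the step-three identity equals $\bigcap_{u\in\Omega} H^-_{u,f_\Lambda(u)+(\lambda-\Lambda)h_E(u)} = \bigcap_{u\in\Omega} H^-_{u,h_K(u)+\lambda h_E(u)} = K(\Omega,\lambda)$, as desired.

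**The main obstacle.** The delicate point is the range of validity: Lemma~\ref{lem: inner representation} applies only for $\mu \in [-\inr(L;E),0]$, so I must verify that $\lambda - \Lambda \ge -\inr(L;E)$, i.e., that $\inr(K(\Omega,\Lambda);E) \ge \Lambda - \lambda$ for all $\lambda > -\inr(K;E)$, equivalently $\inr(K(\Omega,\Lambda);E) \ge \Lambda + \inr(K;E)$. For the genuine parallel bodies $K_\Lambda$ this is classical (Lemma~\ref{lem: in of out} and the behaviour of inradius under outer parallel addition), but for the larger Wulff bodies $K(\Omega,\Lambda)$ it needs a separate argument. I would handle it by noting that $K_\Lambda \subseteq K(\Omega,\Lambda)$ (Lemma~\ref{lem:  rem inner}(i),(iv)), so $\inr(K(\Omega,\Lambda);E) \ge \inr(K_\Lambda;E)$, and then show $\inr(K_\Lambda;E) = \Lambda + \inr(K;E)$ for $\Lambda \ge 0$ (a translate of $(\inr(K;E)+\Lambda)E$ fits inside $K + \Lambda E$), while for $-\inr(K;E) < \Lambda < 0$ one uses Lemma~\ref{lem: in of out} directly on the $K(\Omega,\cdot)$ family after first establishing $K(\Omega,\lambda) = K(\Omega,\Lambda) \sim |\lambda-\Lambda|E$ for such $\Lambda<0$ via Lemma~\ref{lem:  rem inner}(iii) (which reduces $K(\Omega,\cdot)$ to ordinary inner parallel bodies $K_\cdot$ on the negative range). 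A clean way to organize the whole proof is therefore to split into the cases $\Lambda \ge 0$ and $-\inr(K;E) < \Lambda < 0$; in the latter case everything collapses to the already-established identity $K(\Omega,\cdot)=K_\cdot$ on $[-\inr(K;E),0]$ combined with Lemma~\ref{lem: in of out}, while the former case is the one requiring the inradius estimate above together with Lemma~\ref{lem: inner representation} applied to $L=K(\Omega,\Lambda)$.
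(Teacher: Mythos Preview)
Your outline gets one inclusion but not the other. From steps~1 and~3 together with $h_L\le f_\Lambda$ on $\Omega$ you correctly obtain
$(K(\Omega,\Lambda))_{\lambda-\Lambda}=\bigcap_{u\in\Omega}H^-_{u,h_L(u)+(\lambda-\Lambda)h_E(u)}\subseteq\bigcap_{u\in\Omega}H^-_{u,f_\Lambda(u)+(\lambda-\Lambda)h_E(u)}=K(\Omega,\lambda)$.
But in step~4 you promote this to an equality ``by the step-three identity,'' and neither step~2 (equality of the two half-space intersections at level $\mu=0$) nor step~3 (which is formulated for $h_L$, not for $f_\Lambda$) yields that: two families of half-spaces over $\Omega$ having the same intersection need not have the same intersection after each is shrunk by $|\mu|h_E(u)$. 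You could patch the case $\Lambda>0$ via Lemma~\ref{lem: h K(Omega,lambda) on Omega} (which gives $h_L=f_\Lambda$ pointwise on $\Omega$), but you never invoke it, and for $\Lambda\le 0$ that lemma is unavailable, so the gap is real in your main line of argument.

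The paper closes this gap with a two-line direct argument that works uniformly in $\Lambda$: if $x\in K(\Omega,\lambda)$ then $\langle x,u\rangle\le h_K(u)+\lambda h_E(u)$ for all $u\in\Omega$, hence $x+|\lambda-\Lambda|E\subseteq\bigcap_{u\in\Omega}H^-_{u,h_K(u)+\Lambda h_E(u)}=K(\Omega,\Lambda)$, i.e., $x\in(K(\Omega,\Lambda))_{\lambda-\Lambda}$. This single step supplies the missing inclusion, shows $(K(\Omega,\Lambda))_{\lambda-\Lambda}\neq\emptyset$ (so the inradius condition you flag as the ``main obstacle'' is automatic), and renders the case split $\Lambda\ge 0$ versus $\Lambda<0$ and the detour through Lemma~\ref{lem: in of out} unnecessary.
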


\begin{proof}
Let $K,E \in \mathcal{K}_n^n$ and let $\Omega \subseteq \s^{n-1}$ determine $K$. 

From the definition of $K(\Omega,\Lambda)$ it follows that $K(\Omega,\Lambda)$ is determined by $\Omega$. It follows also from the definition, that $h_{K(\Omega,\Lambda)}(u) \le h_K(u)+\Lambda h_E(u)$ for all $u \in \Omega$. Using the latter, together with Lemma~\ref{lem:  inner representation}, we obtain
\[
(K(\Omega,\Lambda))_{\lambda-\Lambda}=\bigcap_{u \in \Omega} H^-_{u,h_{K(\Omega,\Lambda)}(u)+(\lambda-\Lambda)h_E(u)} \subseteq \bigcap_{u \in \Omega} H^-_{u,(h_K(u)+\Lambda h_E(u))+(\lambda-\Lambda)h_E(u)} =K(\Omega,\lambda).
\]

To prove the reverse inclusion, let $x \in K(\Omega,\lambda)$. Then $x \in H^-_{u,h_K(u)+\lambda h_E(u)}$ for every $u \in \Omega$. As in the proof of  Lemma \ref{lem: inner representation}, it follows that
\[
x+|\lambda-\Lambda|E \subseteq H^-_{u,(h_K(u)+\lambda h_E(u))+|\lambda-\Lambda|h_E(u)}=H^-_{u,h_K(u)+\Lambda h_E(u)}
\]
for every $u \in \Omega$. Since the last inclusion holds for all $u \in \Omega$, we get $x+|\lambda-\Lambda|E \subseteq K(\Omega,\Lambda)$. That is, $x \in (K(\Omega,\lambda))_{\lambda-\Lambda}$.
\end{proof}

\begin{lem}\label{lem: h K(Omega,lambda) on Omega}
Let $K,E \in \mathcal{K}_n^n$ and let $\Omega \subseteq \s^{n-1}$ determine $K$. Then, for any $\lambda > 0$,
\begin{equation}\label{eq:  hKO}
h_{K(\Omega,\lambda)}(u)= h_K(u)+\lambda h_E(u) \qquad \text{for all } u\in \Omega.
\end{equation}
\end{lem}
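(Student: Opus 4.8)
The statement asserts that for $\lambda > 0$ the Wulff shape $K(\Omega,\lambda)$ has support function exactly equal to the prescribed data $h_K + \lambda h_E$ on the generating set $\Omega$; that is, the Wulff construction does not "lose" any of the prescribed values at the normals in $\Omega$. My plan is to prove the two inequalities $h_{K(\Omega,\lambda)}(u) \le h_K(u)+\lambda h_E(u)$ and $h_{K(\Omega,\lambda)}(u) \ge h_K(u)+\lambda h_E(u)$ separately for each fixed $u \in \Omega$. The first is immediate: since $K(\Omega,\lambda) \subseteq H^-_{u,h_K(u)+\lambda h_E(u)}$ by the very definition \eqref{eq:  def KlO}, the support function in direction $u$ cannot exceed the right-hand side. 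The content is the reverse inequality, i.e.\ producing for each $u\in\Omega$ a point $x \in K(\Omega,\lambda)$ with $\langle x,u\rangle = h_K(u)+\lambda h_E(u)$.

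For the reverse inequality I would argue as follows. Fix $u \in \Omega$ with $\lambda > 0$. By Lemma~\ref{lem:  Wulff} (with $\Lambda = \lambda$ applied at an interior parameter, or more directly by the structure of $K(\Omega,\lambda)$), or simply from \eqref{eq:  def KlO} together with Lemma~\ref{lem: inner representation}, we have $K(\Omega,\lambda) \supseteq K + \lambda E = K_\lambda$ by Lemma~\ref{lem:  rem inner}(iv). Pick points $x_K \in K$ and $x_E \in E$ with $\langle x_K,u\rangle = h_K(u)$ and $\langle x_E,u\rangle = h_E(u)$; such points exist because support functions are attained on compact sets. Then $x := x_K + \lambda x_E \in K+\lambda E \subseteq K(\Omega,\lambda)$, and $\langle x,u\rangle = h_K(u)+\lambda h_E(u)$. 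Hence $h_{K(\Omega,\lambda)}(u) \ge \langle x,u\rangle = h_K(u)+\lambda h_E(u)$, which combined with the first inequality gives \eqref{eq:  hKO}.

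I should note that the inclusion $K+\lambda E \subseteq K(\Omega,\lambda)$ for $\lambda \ge 0$ used here is exactly Lemma~\ref{lem:  rem inner}(iv), which in turn follows from \eqref{eq:  Kl intersection} and the fact that $\Omega \subseteq \s^{n-1}$ (so intersecting over $\Omega$ yields a superset of the intersection over all of $\s^{n-1}$). So the argument is essentially a two-line sandwich once that inclusion is invoked. There is no real obstacle here; the only subtlety worth flagging is that the equality genuinely can fail at normals $u \notin \Omega$ (the Wulff shape may be strictly larger than $K+\lambda E$, as the square example in Lemma~\ref{lem:  rem inner}(iv) shows), so the restriction "for all $u \in \Omega$" in the statement is essential and the proof must not try to say more. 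The hypothesis $\lambda>0$ is what makes $K+\lambda E$ a Minkowski sum with a genuinely added copy of $E$; for $\lambda \le 0$ the corresponding statement is the (already established) defining property of inner parallel bodies in Lemma~\ref{lem: inner representation}, so the case split in \eqref{eq:  def KlO} is reflected here too.
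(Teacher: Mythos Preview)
Your proof is correct and follows essentially the same approach as the paper: both obtain the upper bound $h_{K(\Omega,\lambda)}(u) \le h_K(u)+\lambda h_E(u)$ directly from the defining intersection \eqref{eq:  def KlO}, and the lower bound from the inclusion $K+\lambda E \subseteq K(\Omega,\lambda)$. Your explicit witness $x = x_K + \lambda x_E$ simply unpacks the identity $h_{K+\lambda E} = h_K + \lambda h_E$ that the paper invokes without further comment.
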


\begin{proof}
Let $K,E \in \mathcal{K}_n^n$ and let $\Omega \subseteq \s^{n-1}$ determine $K$. 
Directly from the definition \eqref{eq:  def KlO} of $K(\Omega,\lambda)$, it follows that $h_{K(\Omega,\lambda)}(u) \le h_K(u)+\lambda h_E(u)$ for all $u \in \Omega$.

 On the other hand, we have 
\[
K(\Omega,\lambda)=\bigcap_{u \in \Omega} H^-_{u,h_K(u)+\lambda h_E(u)} \supseteq \bigcap_{u \in \s^{n-1}} H^-_{u,h_K(u)+\lambda h_E(u)}=K+\lambda E.
\]
From the latter it follows that $h_{K(\Omega,\lambda)}(u) \ge h_K(u)+\lambda h_E(u)$ for any $u\in\s^{n-1}$. Thus,
\begin{equation*}
h_{K(\Omega,\lambda)}(u)= h_K(u)+\lambda h_E(u) \qquad \text{for all }u \in \Omega.
\end{equation*}
\end{proof}

\begin{lem}\label{lem:  hKO}
Let $K,E \in \mathcal{K}_n^n$, let $\Omega \subseteq \s^{n-1}$ determine $K$, and let $\lambda > 0$. If $K(\Omega,\lambda)$ is homothetic to $K$ then $K$ is homothetic to $E^\Omega$.
\end{lem}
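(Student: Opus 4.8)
The plan is to compare the support functions of $K(\Omega,\lambda)$ and $K$ on the set $\Omega$, and then to exploit that both $K$ and $E^\Omega$ are determined by $\Omega$. Fix a homothety realizing the hypothesis, say $K(\Omega,\lambda)=\alpha K+x_0$ with $\alpha>0$ and $x_0\in\R^n$. The first step is to check that necessarily $\alpha>1$: by Lemma~\ref{lem:  rem inner}(iv) we have $K+\lambda E=K_\lambda\subseteq K(\Omega,\lambda)$, and the Brunn--Minkowski inequality $\vol(A+B)^{1/n}\ge\vol(A)^{1/n}+\vol(B)^{1/n}$ (which follows from Theorem~\ref{gral BM th} with $i=0$) gives $\vol(K+\lambda E)^{1/n}\ge\vol(K)^{1/n}+\lambda\,\vol(E)^{1/n}>\vol(K)^{1/n}$, since $\vol(E)>0$ and $\lambda>0$. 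Hence $\alpha^n\vol(K)=\vol(K(\Omega,\lambda))\ge\vol(K+\lambda E)>\vol(K)$, so $\alpha>1$.

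Next I would pass to support functions. From $K(\Omega,\lambda)=\alpha K+x_0$ we get $h_{K(\Omega,\lambda)}(u)=\alpha h_K(u)+\langle x_0,u\rangle$ for all $u\in\s^{n-1}$. Restricting to $u\in\Omega$ and invoking Lemma~\ref{lem: h K(Omega,lambda) on Omega} (valid since $\lambda>0$), this becomes $h_K(u)+\lambda h_E(u)=\alpha h_K(u)+\langle x_0,u\rangle$ for all $u\in\Omega$, that is,
\[
h_E(u)=\frac{\alpha-1}{\lambda}\,h_K(u)+\frac{1}{\lambda}\langle x_0,u\rangle\qquad\text{for all }u\in\Omega.
\]
Putting $c=\frac{\alpha-1}{\lambda}$ (which is positive by the first step) and $y=\frac{1}{\lambda}x_0$, the right-hand side is precisely $h_L(u)$ for the convex body $L:=cK+y$, so that $h_L(u)=h_E(u)=h_{E^\Omega}(u)$ for every $u\in\Omega$, the last equality being noted in the remark introducing $E^\Omega$.

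Finally I would close with a determinedness argument. The body $E^\Omega$ is determined by $\Omega$ by its very definition, and $L=cK+y$ is determined by $\Omega$ because $K$ is and this property is preserved under positive dilations and translations (a point $z$ lies in $cK+y$ if and only if $\langle z,u\rangle\le h_L(u)$ for all $u\in\Omega$). Since two convex bodies that are both determined by $\Omega$ and have the same support function on $\Omega$ must coincide, we conclude $cK+y=E^\Omega$, i.e. $K=\tfrac1c\,E^\Omega-\tfrac1c\,y$, so that $K$ is homothetic to $E^\Omega$.

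The only point that requires care is the inequality $\alpha>1$: if one only knew $\alpha>0$, then $\frac{\alpha-1}{\lambda}h_K+\frac1\lambda\langle x_0,\cdot\rangle$ need not be the support function of a convex body and the concluding determinedness step would break down. The strict monotonicity of $\lambda\mapsto\vol(K+\lambda E)$ for $\lambda>0$, together with the inclusion $K+\lambda E\subseteq K(\Omega,\lambda)$, handles this cleanly, using only that $E$ has nonempty interior.
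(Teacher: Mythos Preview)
Your proof is correct and follows essentially the same route as the paper's: pass to support functions on $\Omega$ via Lemma~\ref{lem: h K(Omega,lambda) on Omega}, solve for $h_E$ there, and then use that both $K$ (up to homothety) and $E^\Omega$ are determined by $\Omega$. The paper phrases the last step as a direct computation of the intersection $E^\Omega=\bigcap_{u\in\Omega}H^-_{u,h_E(u)}=\frac{1}{\lambda}\bigl((\alpha-1)K+x_0\bigr)$, whereas you argue via ``two bodies determined by $\Omega$ that agree on $\Omega$ coincide''; these are the same reasoning. Your explicit justification that $\alpha>1$ (via $K+\lambda E\subseteq K(\Omega,\lambda)$ and Brunn--Minkowski) is more careful than the paper, which simply asserts it.
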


\begin{proof}
Let $K,E \in \mathcal{K}_n^n$ and let $\Omega \subseteq \s^{n-1}$ determine $K$. Let $\lambda > 0$ and assume that $K(\Omega,\lambda)$ is homothetic to $K$. Then, there are $\alpha > 1$ and $x_0 \in \mathbb{R}^n$ such that $K(\Omega,\lambda)=\alpha K + x_0$. Hence, $h_{K(\Omega,\lambda)}=\alpha h_K+ \langle x_0,\cdot\rangle$. Thus, \eqref{eq:  hKO} yields
\[
h_E(u)= \frac{1}{\lambda}\bigr( (\alpha-1)h_K(u)+\langle x_0,u\rangle\bigl), \qquad u \in \Omega.
\]
Finally, it is enough to observe that
\begin{align*}
E^\Omega &= \bigcap_{u \in \Omega} H^-_{u,h_E(u)}
= \bigcap_{u \in \Omega} H^-_{u,\frac{1}{\lambda}( (\alpha-1)h_K(u)+\langle x_0,u\rangle)}\\
&= \frac{1}{\lambda}\left( (\alpha-1)\bigcap_{u \in \Omega} H^-_{u,h_K(u)}+ x_0\right)=\frac{1}{\lambda}\bigr((\alpha-1)K+x_0\bigr).
\end{align*}
That is, $K$ is homothetic to $E^\Omega$.
\end{proof}

Now we can prove Theorem~\ref{thm:  Wulff}

\begin{proof}[Proof of Theorem~\ref{thm:  Wulff}]
Let $K,E \in \mathcal{K}_n^n$ and let $\Omega \subseteq \s^{n-1}$ determine $K$. In order to prove that the relative isoperimetric quotient function $\I^\Omega(\lambda)=\frac{\Sa(K(\Omega,\lambda);E)^n}{\vol(K(\Omega,\lambda))^{n-1}}$ is decreasing, we will interpret the assertion in terms of the relative (to $E$) isoperimetric function $\I(\lambda)=\I(L_\lambda;E)=\frac{\Sa(L_\lambda;E)^{n}}{\vol(L_\lambda)^{n-1}}$ for an appropriate convex body $L$, since for this function we have already proven in Theorem \ref{thm:overline{I}}, that it is decreasing.

Let $\Lambda> -\inr(K;E)$ be fixed. By Lemma~\ref{lem:  Wulff}, $K(\Omega,\lambda)=(K(\Omega,\Lambda))_{\lambda-\Lambda}$ for any $\lambda\in(-\inr(K;E),\Lambda]$ and thus, $\I^\Omega(\lambda)=\I(K(\Omega,\Lambda)_{\lambda-\Lambda};E)$ for all $\lambda \in (-\inr(K;E),\Lambda]$. Hence, a direct application of Theorem~\ref{thm:overline{I}} to the convex body $K(\Omega,\Lambda)$ gives the claim of Theorem~\ref{thm:  Wulff} for all $\lambda,\lambda_0,\lambda_1 \in (-\inr(K;E),\Lambda]$.

Since this can be done for all $\Lambda > -\inr(K;E)$, the proof of the monotonicity and the equivalence of (i)-(iv) is complete. It only remains to prove the claim concerning $\lambda_1 >0$.

Let $\lambda_1>0$, and assume that any of the equivalent assertions (i)-(iv) holds. Using (iv) we have $\I^\Omega(0)=\I^\Omega(\lambda_1)$, since $-\inr(K;E) < 0 < \lambda_1$. Now, (ii) shows that $K(\Omega,0)=K$ is homothetic to $K(\Omega,\lambda_1)$, which together with Lemma~\ref{lem:  hKO} yields that $K$ is homothetic to $E^\Omega$.

Finally, if $K$ is homothetic to $E^\Omega$, the invariance under homotheties of the isoperimetric quotient provides us  with the last assertion. Indeed, assume that $K=E^\Omega=\bigcap_{u \in \Omega} H^-_{u,h_E(u)}$ w.l.o.g., then $K(\Omega,\lambda)=
(1+\lambda)E^\Omega$ and $\I^\Omega(\lambda)=\I^\Omega\left((1+\lambda)E^\Omega;E\right)=\I\left(E^\Omega;E\right)$ for $-\inr(K;E)=-1 < \lambda$. Finally, since $E^\Omega$ is a tangential body of $E$, \cite[Theorem 7.6.17]{Sch} gives $\W_0\left(E^\Omega;E\right)=\W_1\left(E^\Omega;E\right)$; i.e., $\vol\left(E^\Omega\right)=\frac{1}{n}\Sa\left(E^\Omega;E\right)$. Thus, $\I\left(E^\Omega;E\right)=n^n \vol\left(E^\Omega\right)$, which finishes the proof.
\end{proof}

\begin{rem}
We have obtained Theorem~\ref{thm:  Wulff} as a consequence of Theorem~\ref{thm:overline{I}}. Conversely, Theorem~\ref{thm:overline{I}} follows from Theorem~\ref{thm:  Wulff}, because
$K_\lambda=K\left(\s^{n-1},\lambda\right)$ by Lemma~\ref{lem:  rem inner}(i),
and in turn $\I(\lambda)=\I^{\s^{n-1}}(\lambda)$.
\end{rem}

Let us have a look at relations between the classes $(K_\lambda)_{\lambda \ge -\inr(K;E)}$ and $(K(\Omega,\lambda))_{\lambda \ge -\inr(K;E)}$.

\begin{prop}\label{prop:  K_Omega tang}Let $K,E \in \mathcal{K}_n^n$, let $\Omega_1,\Omega_2 \subseteq \s^{n-1}$ both determine $K$, and suppose that $\Omega_1 \subseteq \Omega_2$. Then
\begin{equation}\label{eq:  Omega12}
K(\Omega_1,\lambda) \supseteq K(\Omega_2,\lambda) \supseteq K_\lambda
\end{equation}
for all $\lambda \ge -\inr(K;E)$.
Furthermore,
\begin{enumerate}
\item for $\lambda \le 0$, there is equality all over \eqref{eq:  Omega12};
\item if $\lambda > 0$, then both $K(\Omega_1,\lambda)$ and $K(\Omega_2,\lambda)$ are tangential bodies of $K_\lambda$, and moreover, $K(\Omega_1,\lambda)$ is a tangential body of $K(\Omega_2,\lambda)$.
\end{enumerate}
\end{prop}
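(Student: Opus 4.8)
The plan is to establish the two inclusions in \eqref{eq:  Omega12} directly from the defining formula \eqref{eq:  def KlO}, then treat the cases $\lambda \le 0$ and $\lambda > 0$ separately, in each case reducing to results already available in the excerpt. The chain \eqref{eq:  Omega12} is immediate: since $\Omega_1 \subseteq \Omega_2 \subseteq \s^{n-1}$, the intersection defining $K(\Omega_1,\lambda)$ runs over fewer halfspaces than that defining $K(\Omega_2,\lambda)$, which in turn runs over fewer than that defining $K_\lambda = K(\s^{n-1},\lambda)$ (using Lemma~\ref{lem:  rem inner}(i)); more halfspaces means a smaller set, so $K(\Omega_1,\lambda) \supseteq K(\Omega_2,\lambda) \supseteq K_\lambda$ for every $\lambda \ge -\inr(K;E)$.

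For part (1), i.e.\ $\lambda \le 0$: Lemma~\ref{lem:  rem inner}(iii) (equivalently Lemma~\ref{lem:  inner representation}) states that $K(\Omega,\lambda) = K_\lambda$ for every $\Omega$ determining $K$ and every $\lambda \in [-\inr(K;E),0]$. Applying this with $\Omega = \Omega_1$ and with $\Omega = \Omega_2$ gives $K(\Omega_1,\lambda) = K_\lambda = K(\Omega_2,\lambda)$, so \eqref{eq:  Omega12} is a chain of equalities. Nothing further is needed here.

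For part (2), fix $\lambda > 0$. I first show $K(\Omega,\lambda)$ is a tangential body of $K_\lambda = K + \lambda E$ for any $\Omega$ determining $K$; then the three assertions follow. By Lemma~\ref{lem: h K(Omega,lambda) on Omega}, $h_{K(\Omega,\lambda)}(u) = h_K(u) + \lambda h_E(u) = h_{K_\lambda}(u)$ for all $u \in \Omega$, while $K_\lambda \subseteq K(\Omega,\lambda)$ by \eqref{eq:  Omega12}. To verify the tangency condition of Definition~\ref{d: tang body}, take a boundary point $x$ of $K(\Omega,\lambda)$ and a supporting hyperplane of $K(\Omega,\lambda)$ at $x$ with outer normal $v$; I must produce such a supporting hyperplane that also supports $K_\lambda$. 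Since $K(\Omega,\lambda)$ is the intersection of the halfspaces $H^-_{u,h_{K(\Omega,\lambda)}(u)}$, $u \in \Omega$, and since it is determined by $\Omega$ (as noted in the proof of Lemma~\ref{lem:  Wulff}), we have $\U(K(\Omega,\lambda)) \subseteq \cl(\Omega)$; hence the outer normal $v$ at a regular boundary point lies in $\cl(\Omega)$, and by a continuity/limiting argument the supporting hyperplane with normal $v$ has the form $H_{v, h_{K(\Omega,\lambda)}(v)}$ with $h_{K(\Omega,\lambda)}(v) = h_{K_\lambda}(v)$ (the equality on $\Omega$ extends to $\cl(\Omega)$ by continuity of support functions). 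For a non-regular boundary point one writes $v$ as a convex combination of extreme normals, each of which supports $K_\lambda$ at the matching support value, so the same hyperplane supports $K_\lambda$. Thus $K(\Omega,\lambda)$ is a tangential body of $K_\lambda$; applying this to $\Omega_1$ and to $\Omega_2$ gives the first claim of (2). Finally, to see that $K(\Omega_1,\lambda)$ is a tangential body of $K(\Omega_2,\lambda)$: we have $K(\Omega_2,\lambda) \subseteq K(\Omega_1,\lambda)$ from \eqref{eq:  Omega12}, and on $\Omega_1 \subseteq \Omega_2$ both support functions equal $h_K + \lambda h_E$ by Lemma~\ref{lem: h K(Omega,lambda) on Omega}, so $\U(K(\Omega_1,\lambda)) \subseteq \cl(\Omega_1)$ together with the matching support values yields, by the same argument, that every supporting hyperplane of $K(\Omega_1,\lambda)$ at a boundary point also supports $K(\Omega_2,\lambda)$.

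The main obstacle is the tangency verification in part (2): one must handle supporting hyperplanes at \emph{arbitrary} (not just regular) boundary points of $K(\Omega,\lambda)$, where the outer normal $v$ need not lie in $\Omega$ itself but only in $\cl(\Omega)$, and must confirm that the relevant support value of $K(\Omega,\lambda)$ still coincides with that of $K_\lambda$. This is where one invokes Remark~\ref{KintersectionExtreme} (so that $\U(K(\Omega,\lambda)) \subseteq \cl(\Omega)$), the continuity of support functions (to pass the equality $h_{K(\Omega,\lambda)} = h_{K_\lambda}$ from $\Omega$ to $\cl(\Omega)$), and the fact that a supporting hyperplane at a non-regular point is a limit of supporting hyperplanes at regular points. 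Everything else is bookkeeping with the intersection formula \eqref{eq:  def KlO} and the lemmas already proved.
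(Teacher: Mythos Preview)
Your argument is correct and follows the paper's outline: the chain \eqref{eq:  Omega12} from \eqref{eq:  def KlO}, part (i) from Lemma~\ref{lem:  inner representation}, and part (ii) from Lemma~\ref{lem: h K(Omega,lambda) on Omega} together with the fact that $K(\Omega_j,\lambda)$ is determined by $\Omega_j$. The only difference is that you work harder than necessary in part (ii), and one step there is not quite clean.

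The paper dispatches the tangentiality of $K(\Omega_j,\lambda)$ over $K_\lambda$ in one line: $K(\Omega_j,\lambda)$ is determined by $\Omega_j$ (by \eqref{eq:  def KlO}), hence through every boundary point there passes a supporting hyperplane with outer normal in $\cl(\Omega_j)$; on $\cl(\Omega_j)$ the support functions of $K(\Omega_j,\lambda)$ and $K_\lambda$ agree by Lemma~\ref{lem: h K(Omega,lambda) on Omega} and continuity, so that hyperplane supports $K_\lambda$. Your regular/non-regular case split is unnecessary, and the clause ``so the same hyperplane supports $K_\lambda$'' (meaning the hyperplane with the given normal $v$) is not justified: writing $v=\sum c_iu_i$ with $u_i\in\U(K(\Omega,\lambda))$ and using sublinearity only gives $h_{K_\lambda}(v)\le\sum c_ih_{K_\lambda}(u_i)=h_{K(\Omega,\lambda)}(v)$, not equality. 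Fortunately Definition~\ref{d: tang body} asks only for \emph{some} supporting hyperplane at $x$ that also supports $K_\lambda$, and any one of the extreme normals $u_i$ (which lie in $\cl(\Omega)$ and are normals at $x$) already provides one---so the fix is to drop the combination and just pick one $u_i$.

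For the final claim the paper uses a shorter sandwich argument instead of rerunning the support-function computation: since $K_\lambda\subseteq K(\Omega_2,\lambda)\subseteq K(\Omega_1,\lambda)$ and $K(\Omega_1,\lambda)$ is already known to be a tangential body of $K_\lambda$, every common supporting hyperplane of $K(\Omega_1,\lambda)$ and $K_\lambda$ automatically supports the intermediate body $K(\Omega_2,\lambda)$.
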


\begin{proof}
Let $K,E \in \mathcal{K}_n^n$, and let $\Omega_1,\Omega_2 \subseteq \s^{n-1}$ both determine $K$. 

The relations \eqref{eq:  def KlO} and \eqref{eq:  Kl intersection} directly yield \eqref{eq:  Omega12}. 

(i) follows from Lemma~\ref{lem:  inner representation}.

(ii): Let $\lambda\geq 0$. Then Lemma~\ref{lem: h K(Omega,lambda) on Omega} shows that $h_{K(\Omega_1,\lambda)}(u)=h_{K_\lambda}(u)$ for all $u \in \Omega_1$. Consequently, $K(\Omega_1,\lambda)$ is a tangential body of $K_\lambda$, since $K(\Omega_1,\lambda)$ is determined by $\Omega_1$ according to its definition \eqref{eq:  def KlO}. By the same arguments, $K(\Omega_2,\lambda)$ is a tangential body of $K_\lambda$.

Finally, since $K(\Omega_1,\lambda)$ is a tangential body of $K_\lambda$, inclusions \eqref{eq:  Omega12} imply that $K(\Omega_1,\lambda)$ is also a tangential body of $K(\Omega_2,\lambda)$, because every common supporting hyperplane of $K(\Omega_1,\lambda)$ and $K_\lambda$ supports $K(\Omega_2,\lambda)$, too.
\end{proof} 

Next we consider the asymptotic behavior of $K_\lambda$ and $K(\Omega,\lambda)$ for $\lambda \to \infty$. We obtain in particular that the inclusions in \eqref{eq:  Omega12} may be strict for large $\lambda$, since the respective limit shapes can be different.

\begin{prop}\label{prop:  classes}
Let $K,E \in \mathcal{K}_n^n$ and let $\Omega \subseteq \s^{n-1}$ determine $K$. Then the convex bodies $K_\lambda$ and $K(\Omega,\lambda)$ converge {\it in $\lambda$} in the sense that
\[
\lim_{\lambda\to\infty} \frac{1}{\lambda}K_\lambda=E
\quad\text{ and }\quad
\lim_{\lambda\to\infty} \frac{1}{\lambda}K(\lambda,\Omega)=E^\Omega
\]
in the Hausdorff metric. Moreover,
\[
\lim_{\lambda\to\infty} \I(\lambda)=\I(E;E)=n^n \vol(E)
\quad\text{ and }\quad
\lim_{\lambda\to\infty} \I^\Omega(\lambda)=\I\left(E^\Omega;E\right)=n^n \vol\left(E^\Omega\right).
\]
\end{prop}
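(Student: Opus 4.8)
The plan is to reduce both limit assertions to the convergence of the rescaled bodies and then to invoke the homothety invariance of $\I(\cdot;E)$ together with the continuity of the quermassintegrals.

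\textit{Step 1 (convergence of $\tfrac1\lambda K_\lambda$).} Since $K_\lambda=K+\lambda E$ for $\lambda\ge 0$, we have $h_{\frac1\lambda K_\lambda}=\frac1\lambda h_K+h_E$ on $\s^{n-1}$, and $\|\frac1\lambda h_K\|_\infty\to 0$ because $h_K$ is bounded on the sphere. Uniform convergence of support functions is equivalent to Hausdorff convergence, so $\frac1\lambda K_\lambda\to E$.

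\textit{Step 2 (convergence of $\tfrac1\lambda K(\Omega,\lambda)$).} Here the half-space representation \eqref{eq:  def KlO} is the natural tool. Translating $K$ by a vector translates each $K(\Omega,\lambda)$ by the same vector (it translates every defining half-space in \eqref{eq:  def KlO}), leaves $\Omega$ determining $K$, and leaves $E^\Omega$ and all isoperimetric quotients in question unchanged; hence we may assume $0\in\operatorname{int}(K)$, so that $h_K>0$ on $\s^{n-1}$. Then $\frac1\lambda K(\Omega,\lambda)=\bigcap_{u\in\Omega}H^-_{u,\,h_E(u)+h_K(u)/\lambda}$, and since $h_K(u)/\lambda$ strictly decreases to $0$ as $\lambda\to\infty$, the convex bodies $\frac1\lambda K(\Omega,\lambda)$ form a nested decreasing family, each containing $E^\Omega=\bigcap_{u\in\Omega}H^-_{u,h_E(u)}$, with $\bigcap_{\lambda>0}\frac1\lambda K(\Omega,\lambda)=\bigcap_{u\in\Omega}H^-_{u,h_E(u)}=E^\Omega$. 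A nested decreasing family of compact convex sets converges to its intersection in the Hausdorff metric (a routine compactness argument; the continuous parameter case reduces to a sequence by monotonicity of $\lambda\mapsto d_H(\frac1\lambda K(\Omega,\lambda),E^\Omega)$), so $\frac1\lambda K(\Omega,\lambda)\to E^\Omega$. Alternatively one could appeal to the continuity of the Alexandrov/Wulff construction from \cite[Section~7.5]{Sch}.

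\textit{Step 3 (the isoperimetric quotients).} Since $\I(\cdot;E)$ is invariant under homotheties, $\I(\lambda)=\I(\frac1\lambda K_\lambda;E)$ and $\I^\Omega(\lambda)=\I(\frac1\lambda K(\Omega,\lambda);E)$. The functionals $\vol=\W_0(\cdot;E)$ and $\Sa(\cdot;E)=n\W_1(\cdot;E)$ are continuous on $\K^n$ with respect to the Hausdorff metric (continuity of mixed volumes, see \cite[Section~5.1]{Sch}), and $\vol(E),\vol(E^\Omega)>0$; together with Steps 1 and 2 this yields $\I(\lambda)\to\I(E;E)$ and $\I^\Omega(\lambda)\to\I(E^\Omega;E)$. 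The identities $\I(E;E)=n^n\vol(E)$ and $\I(E^\Omega;E)=n^n\vol(E^\Omega)$ are precisely the ones computed in the proofs of Theorem~\ref{thm:overline{I}} and Theorem~\ref{thm:  Wulff} (for the latter, $E^\Omega$ is a tangential body of $E$, so \cite[Theorem~7.6.17]{Sch} gives $\vol(E^\Omega)=\frac1n\Sa(E^\Omega;E)$).

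The main obstacle is Step 2: the Wulff-shape family $K(\Omega,\lambda)$ is given by an intersection of half-spaces, for which support functions do not transform simply, so the quick support-function argument of Step 1 is unavailable; the monotonicity observation — which becomes available only after the normalization $0\in\operatorname{int}(K)$ — is what makes this step clean.
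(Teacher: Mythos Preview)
Your proof is correct and essentially follows the paper's approach, which is very terse: the paper merely says that the convergences follow from the half-space definitions \eqref{eq:  Kl intersection} and \eqref{eq:  def KlO} and cites the computations of $\I(E;E)$ and $\I\left(E^\Omega;E\right)$ from the proofs of Theorems~\ref{thm:overline{I}} and~\ref{thm:  Wulff}. Your Steps~1--3 simply fill in the details (support-function convergence, the nested-family argument after the harmless normalization $0\in\operatorname{int}(K)$, and continuity of mixed volumes combined with homothety invariance) that the paper leaves implicit.
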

We remark that the convergence result and the limit value for $K_\lambda$ can, in its essence, be found in \cite[p. 56]{SYphd}.

\begin{proof}
The convergences of the bodies follow from their definitions, i.e., from \eqref{eq:  Kl intersection} and \eqref{eq:  def KlO}. 
The remaining claims $\I(E;E)=n^n\vol(E)$ and $\I\left(E^\Omega;E\right)=n^n\vol\left(E^\Omega\right)$ have been shown in the proofs of Theorems~\ref{thm:overline{I}} and \ref{thm:  Wulff}, respectively.\end{proof}

\begin{rem}
Suppose that $\Omega$ determines both $K$ and $E$. Then the families $(K_\lambda)_{\lambda \ge -\inr(K;E)}$ and $(K(\Omega,\lambda))_{\lambda \ge -\inr(K;E)}$ agree for $\lambda \le 0$ and have the same limit shape $E=E^\Omega$. Nevertheless, they do not necessarily coincide. In fact, \cite[pp.\ 23--24]{SYphd} and \cite[Section~3]{HCSq} give examples of polytopes $K,E \in \mathcal{K}_3^3$ such that $\U(K)=\U(E) \subsetneq  \U(K+E)$. Then $K_1 \ne K(\U(K),1)$, because $K(\U(K),1)$ is determined by $\U(K)$ whereas $K_1=K+E$ is not.
\end{rem}

Next we pose a natural question in this context, which we have not been able to answer so far.
\begin{quest}
Let $K,E \in \mathcal{K}_n^n$, let $\Omega_1,\Omega_2  \subseteq \s^{n-1}$ both determine $K$ and suppose that $\Omega_1 \subseteq \Omega_2$. We know that $\I^{\Omega_1}(\lambda)=\I^{\Omega_2}(\lambda)$ for $-\inr(K;E) < \lambda \le 0$ and that $\lim_{\lambda \to \infty} \I^{\Omega_1}(\lambda) \ge \lim_{\lambda \to \infty} \I^{\Omega_2}(\lambda)$. Do we have $\I^{\Omega_1}(\lambda) \ge \I^{\Omega_2}(\lambda)$ also for all $\lambda > 0$?
\end{quest}

We come to isoperimetrically optimal bodies.

\begin{cor}\label{cor:  isop optimal}
Let $E \in \mathcal{K}_n^n$ and let $\Omega \subseteq \s^{n-1}$ be a set that contains the origin in the interior of its convex hull.

Then a convex body $\tilde{K} \in \mathcal{K}_n^n$ is a minimizer of the relative isoperimetric quotient $\I(K;E)$ among all convex bodies $K \in \mathcal{K}_n^n$ that are determined by $\Omega$ if and only if $\tilde{K}$ is homothetic to the tangential body $E^{\Omega}$ of $E$. In particular, that minimal quotient is $\I\left(E^\Omega;E\right)=n^n \vol\left(E^\Omega\right)$.
\end{cor}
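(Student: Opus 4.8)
The plan is to reduce the optimization problem to the monotonicity result of Theorem~\ref{thm:  Wulff}. First I would fix an arbitrary competitor $K \in \mathcal{K}_n^n$ that is determined by $\Omega$ and relate its parallel/Wulff family to that of a body whose family I already understand. The key observation is that, since $K$ is determined by $\Omega$, the Wulff construction $K(\Omega,\lambda)$ of~\eqref{eq:  def KlO} is defined for $\lambda \in [-\inr(K;E),\infty)$, and by Lemma~\ref{lem:  rem inner}(i)--(iii) we have $K = K(\Omega,0)$. Applying Theorem~\ref{thm:  Wulff} to this $K$ and this $\Omega$, the isoperimetric quotient function $\I^\Omega(\lambda)$ is monotonically decreasing on $(-\inr(K;E),\infty)$, so in particular
\[
\I(K;E) = \I^\Omega(0) \ge \lim_{\lambda \to \infty} \I^\Omega(\lambda) = n^n \vol\!\left(E^\Omega\right),
\]
where the last equality is the limit value computed in Proposition~\ref{prop:  classes}. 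This already establishes $E^\Omega$ (and, by homothety invariance of $\I(\cdot;E)$, any homothet of it) as a minimizer with the stated minimal value, and shows no body determined by $\Omega$ does better.

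It remains to pin down the equality case, i.e.\ to show that if $\I(K;E) = n^n\vol(E^\Omega)$ then $K$ is homothetic to $E^\Omega$. Here I would again use Theorem~\ref{thm:  Wulff}: the chain displayed above forces $\I^\Omega(0) = \lim_{\lambda\to\infty}\I^\Omega(\lambda)$, hence $\I^\Omega$ is constant on $[0,\lambda_1]$ for every $\lambda_1 > 0$, so condition (i) of the "moreover" part of Theorem~\ref{thm:  Wulff} holds with $\lambda_0 = 0$; the final clause of that theorem (the case $\lambda_1 > 0$) then yields directly that $K$ is homothetic to $E^\Omega$. Conversely, if $K$ is homothetic to $E^\Omega$, the homothety invariance of $\I(\cdot;E)$ together with $\I(E^\Omega;E) = n^n\vol(E^\Omega)$ (again from the proof of Theorem~\ref{thm:  Wulff}, using that $E^\Omega$ is a tangential body of $E$) gives $\I(K;E) = n^n\vol(E^\Omega)$, so $K$ is a minimizer.

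One small point that needs care, and which I regard as the only genuine obstacle, is the hypothesis on $\Omega$: Corollary~\ref{cor:  isop optimal} assumes only that $\Omega$ contains the origin in the interior of its convex hull, whereas Theorem~\ref{thm:  Wulff} is stated for $\Omega$ that determines a given $K$. But these are compatible: for a competitor $K$ determined by $\Omega$, the set $\Omega$ does determine $K$ by hypothesis, so Theorem~\ref{thm:  Wulff} applies verbatim; and $E^\Omega$ is itself determined by $\Omega$ (indeed $\U(E^\Omega) \subseteq \cl(\Omega)$ by the remark preceding the definition of the form body, since $h_{E^\Omega} = h_E$ on $\Omega$), so it is a legitimate competitor. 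The condition that $0$ lies in the interior of $\operatorname{conv}(\Omega)$ is exactly what guarantees $E^\Omega \in \mathcal{K}_n^n$, i.e.\ that $E^\Omega$ is bounded with nonempty interior, so that all quantities above are finite and positive. I would state these two remarks explicitly at the start of the proof and then the argument is as above.
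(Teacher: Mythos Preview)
Your proposal is correct and follows essentially the same approach as the paper: both start from an arbitrary competitor $K$ determined by $\Omega$, invoke Theorem~\ref{thm:  Wulff} and Proposition~\ref{prop:  classes} to obtain $\I(K;E)=\I^\Omega(0)\ge \lim_{\lambda\to\infty}\I^\Omega(\lambda)=n^n\vol(E^\Omega)$, and then characterize equality via the ``moreover'' part of Theorem~\ref{thm:  Wulff}. The only cosmetic difference is that the paper phrases the equality case through condition~(ii) (mutual homothety of the $K(\Omega,\lambda)$) together with the limit shape from Proposition~\ref{prop:  classes}, whereas you invoke the final $\lambda_1>0$ clause of Theorem~\ref{thm:  Wulff} directly; both routes are immediate.
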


\begin{proof}
Let $K \in \mathcal{K}_n^n$ be determined by $\Omega$. By Theorem~\ref{thm:  Wulff} and Proposition~\ref{prop:  classes}, 
\[
\I(K;E)=\I^\Omega(0) \ge \lim_{\lambda \to \infty} \I^\Omega(\lambda)=\I\left(E^\Omega;E\right)=n^n \vol\left(E^\Omega\right)
\]
with equality if and only if $\I^\Omega(\lambda)$ is constant for $\lambda \ge 0$. The latter holds if and only if all convex bodies $K(\Omega,\lambda)$, $\lambda \ge 0$, are mutually homothetic, and in turn are homothetic to $\lim_{\lambda \to \infty} \frac{1}{\lambda} K(\Omega,\lambda)=E^\Omega$.
\end{proof}

\begin{rem}
If $E=B_n$, Corollary~\ref{cor:  isop optimal} concerns the classical isoperimetric quotient from \eqref{e:class isop quot}. For that case Corollary~\ref{cor:  isop optimal} is a well-known result \cite[p.~385]{Sch}, that goes back to Lindel\"of and Minkowski \cite{Lindeloef1869, Minkowski1911} 
for finite $\Omega$ and to Aleksandrov \cite{Alexandrov1938} for general $\Omega$.

For $E=B_n$ and $\Omega=\s^{n-1}$, we obtain the isoperimetric inequality for arbitrary convex bodies.
\end{rem}


\section{Isoperimetric-type quotients of quermassintegrals}\label{sec:  other qmi}

This section is motivated by the following monotonicity result from \cite{HCSq}.

\begin{prop}[{\cite[Proposition 4.3, Remark 4.4]{HCSq}}]\label{prop:  HCSq}
Let $K,E \in \mathcal{K}_n^n$, let $0 \le i < j < n$, and suppose that $K$ belongs to the class $\r_j$. Then the function
\[
\lambda\mapsto \frac{\W_j(K_\lambda;E)^{n-i}}{\W_i(K_\lambda;E)^{n-j}}
\]
is monotonically decreasing on $(-\inr(K;E),\infty)$.
\end{prop}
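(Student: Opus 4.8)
The plan is to mimic, at the level of quermassintegrals, the argument that proved Theorem~\ref{thm:overline{I}}, replacing the pair $(\vol,\Sa)$ by the pair $(\W_i,\W_j)$ and using the differentiation rule $\frac{\dlat}{\dlat\lambda}\W_i(\lambda)=(n-i)\W_{i+1}(\lambda)$ in place of \eqref{e: der vol}. The key point is that, under the hypothesis $K \in \r_j$, the function $\W_j(\lambda)$ is differentiable on the whole domain with $\frac{\dlat}{\dlat\lambda}\W_j(\lambda)=(n-j)\W_{j+1}(\lambda)$; this is precisely the content that places $K$ in the class $\r_j$ (and by the monotonicity of inner parallel bodies, $K_\lambda \in \r_j$ for all $\lambda$ as well). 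Granting this, one would first observe that $\frac{\dlat}{\dlat\lambda}\W_i(\lambda)=(n-i)\W_{i+1}(\lambda)$ holds for $i$ too (either because $0\le i<j$ forces the corresponding differentiability, or more carefully by invoking \eqref{e:concav Wi} together with the sandwiching forced by the $\r_j$-condition), so both $\W_i(\lambda)$ and $\W_j(\lambda)$ are differentiable.

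Next I would set $f(\lambda)=\W_i(\lambda)^{\frac{1}{n-i}}$ and $g(\lambda)=\W_j(\lambda)^{\frac{1}{n-j}}$. By Theorem~\ref{gral BM th} (the general Brunn--Minkowski inequality for quermassintegrals) combined with Lemma~\ref{fact:parallel_concave} (concavity of the family of parallel bodies), both $f$ and $g$ are concave on $(-\inr(K;E),\infty)$, hence have monotonically decreasing derivatives. One computes $f'(\lambda)=\frac{1}{n-i}\W_i(\lambda)^{\frac{1-(n-i)}{n-i}}(n-i)\W_{i+1}(\lambda)=\W_i(\lambda)^{\frac{i+1-n}{n-i}}\W_{i+1}(\lambda)$, and similarly for $g$. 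The target quantity $\frac{\W_j(K_\lambda;E)^{n-i}}{\W_i(K_\lambda;E)^{n-j}}$ equals $\left(g(\lambda)^{n-j}\right)^{n-i}\big/\left(f(\lambda)^{n-i}\right)^{n-j}=\left(g(\lambda)/f(\lambda)\right)^{(n-i)(n-j)}$, so since the exponent is positive it suffices to show $\lambda\mapsto g(\lambda)/f(\lambda)$ is decreasing. This is the analogue of the step in Theorem~\ref{thm:overline{I}} where $\I(\lambda)$ was recognized as a power of $\frac{\dlat}{\dlat\lambda}\bigl(\vol(\lambda)^{1/n}\bigr)$; here, however, the ratio $g/f$ is not literally a derivative, so one needs an extra argument.

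To handle $g/f$ I would use the homogeneity/concavity structure directly: both $f$ and $g$ are nonnegative concave functions, and I claim $g(\lambda)-c\,f(\lambda)$ behaves monotonically for the right constant. The cleanest route is: fix $-\inr(K;E)<\lambda_0<\lambda_1$, and using concavity of $f$ and $g$ together with the fact that at $\lambda\to -\inr(K;E)$ one of $\W_i,\W_j$ degenerates appropriately (the dimension drop giving $\W_j\to 0$ faster than $\W_i$ when $i<j$), deduce that the chord slopes force $g(\lambda_1)/f(\lambda_1)\le g(\lambda_0)/f(\lambda_0)$; alternatively, one shows $f\,g'-g\,f'\le 0$ pointwise, which after substituting the derivative formulas reduces to the inequality $\W_{i+1}\W_j\cdot\frac{1}{n-i}\cdots$ — i.e. to a single mixed-volume inequality $\W_{i+1}(\lambda)\W_j(\lambda)\le$ (appropriate power product), which is exactly an Aleksandrov--Fenchel-type inequality among consecutive quermassintegrals of $K_\lambda$. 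I expect the main obstacle to be precisely this last point: verifying that the sign of $f g'-g f'$ is controlled, which ultimately rests on the log-concavity of the sequence $(\W_k(\lambda))_k$ (a consequence of Aleksandrov--Fenchel), and checking that the $\r_j$-hypothesis is exactly what is needed to make $\W_j$ differentiable so that this pointwise computation is even legitimate across $\lambda=0$ and down to $-\inr(K;E)$. Once the derivative sign is established, the monotonicity of $g/f$, and hence of the displayed quotient, follows immediately, and one simply cites \cite{HCSq} for the statement that this recovers Proposition~\ref{prop:  HCSq} as stated.
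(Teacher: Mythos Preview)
Your route (b) --- showing $(g/f)'\le 0$ by reducing to $\W_{j+1}/\W_j \le \W_{i+1}/\W_i$, which is the log-concavity of the sequence $(\W_k(K_\lambda;E))_k$ from Aleksandrov--Fenchel --- is correct and would give the result under the hypothesis $K\in\r_j$ (you do need differentiability of $\W_j$, hence $\r_j$ and not less). Your route (a), however, is not reliable: as $\lambda\to -\inr(K;E)$ it is $\W_i$ that vanishes at least as fast as $\W_j$ when $i<j$, not the other way round, and in any case a chord-slope argument anchored at that endpoint does not cleanly control $g/f$.

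The paper's argument is organized differently. Rather than comparing $\W_i$ and $\W_j$ directly, it first treats only consecutive indices: for each $k$ it observes that
\[
\I_k(\lambda)=\frac{\W_{k+1}(\lambda)^{n-k}}{\W_k(\lambda)^{n-k-1}}=\left(\frac{\dlat}{\dlat\lambda}\W_k(\lambda)^{\frac{1}{n-k}}\right)^{n-k},
\]
which is a power of the derivative of a concave function and hence decreasing (this needs only $K\in\r_k$). Then the general quotient is recovered telescopically,
\[
\frac{\W_j(\lambda)^{n-i}}{\W_i(\lambda)^{n-j}}=\prod_{k=i}^{j-1}\I_k(\lambda)^{\frac{(n-i)(n-j)}{(n-k)(n-k-1)}},
\]
so monotonicity follows from that of each factor. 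The payoff of this decomposition is twofold: it never invokes the sequence-log-concavity inequality $\W_k^2\ge \W_{k-1}\W_{k+1}$ explicitly (only Brunn--Minkowski for quermassintegrals in $\lambda$), and --- more importantly --- it requires differentiability only of $\W_i,\ldots,\W_{j-1}$, i.e.\ the weaker assumption $K\in\r_{j-1}$, which is precisely the improvement the paper obtains over Proposition~\ref{prop:  HCSq}. Your direct approach is perfectly valid for the stated hypothesis $K\in\r_j$, but it cannot be pushed to $\r_{j-1}$ because $g'$ would no longer be available.
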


Here the classes $\r_j$ are defined by a differentiability condition of the functions $\lambda\mapsto \W_i(K_\lambda;E)$ on $[-\inr(K;E),\infty)$.
\begin{defn}[\cite{HCS}]\label{d:r_p}    
Let $E\in\K^n_n$ and let $p$ be an integer, $0\leq p\leq n-1$. A
convex body $K\in\K^n$ belongs to the {\em class $\r_p$} if, for
all $0\leq i\leq p$ and for $-\inr(K;E)\leq\lambda<\infty$, the following equalities
hold
\begin{equation*}
\frac{\dlat^-}{\dlat\lambda}\W_i(\lambda)=\frac{\dlat^+}{\dlat\lambda}\W_i(\lambda)=(n-i)\W_{i+1}(\lambda),
\end{equation*}
where the first equation is to be dropped when $\lambda=-\inr(K;E)$.
\end{defn}

We remark that the above definition is natural taking \eqref{e:concav Wi} into account. Proposition~\ref{p:  diff vol} yields
\[
\r_0 = \K^n 
\]
for any $E\in\K^n_n$. Clearly,
\begin{equation}\label{e: content classes ri}
\r_{n-1} \subseteq \r_{n-2} \subseteq \ldots \subseteq \r_1 \subseteq \r_0,
\end{equation}
and the inclusions are strict in general, as was shown in \cite{HCS}.

The case $i=0$, $j=1$ in Proposition~\ref{prop: HCSq} gives the monotonicity of $\I(\lambda)$ proven in Theorem~\ref{thm:overline{I}}, but only under the additional assumption $K \in \r_1$. In a similar way as we have relaxed the assumptions (to none) in the case $i=0$, $j=1$ in Theorem \ref{thm:overline{I}}, in the next we shall see that, more generally, the assumption $K \in \r_j$ from Proposition~\ref{prop: HCSq} can be relaxed by $K \in \r_{j-1}$.
We need the following lemma.

\begin{lem}\label{l:der Wi^1/n-i}
Let $0\leq i\leq n-1$, $E \in \mathcal{K}_n^n$, $K \in \r_i$ and $-\inr(K;E) < \lambda < \infty$. Then
\begin{equation}\label{e:der Wi^1/n-i}
\frac{\dlat}{\dlat \lambda} \left(\W_i(\lambda)^{\frac{1}{n-i}}\right)=\W_i(\lambda)^{\frac{1-n+i}{n-i}}\W_{i+1}(\lambda).
\end{equation}
\end{lem}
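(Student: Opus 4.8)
The plan is to derive \eqref{e:der Wi^1/n-i} from the chain rule, once I establish that $\lambda \mapsto \W_i(\lambda)$ is differentiable with derivative $(n-i)\W_{i+1}(\lambda)$ on $(-\inr(K;E),\infty)$. The hypothesis $K \in \r_i$ gives exactly this: by Definition~\ref{d:r_p}, for every $0 \le k \le i$ and every $\lambda$ in the open interval, both one-sided derivatives of $\W_k(\lambda)$ coincide and equal $(n-k)\W_{k+1}(\lambda)$; specializing $k=i$ yields that $\W_i(\lambda)$ is differentiable with $\frac{\dlat}{\dlat\lambda}\W_i(\lambda)=(n-i)\W_{i+1}(\lambda)$.

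Next I would note that $\W_i(\lambda) > 0$ for all $\lambda \in (-\inr(K;E),\infty)$. Indeed, $K_\lambda \in \K^n_n$ for $\lambda > -\inr(K;E)$ (it has nonempty interior, since only $K_{-\inr(K;E)}$ is lower-dimensional), and every relative quermassintegral $\W_i(K_\lambda;E)$ of a body with nonempty interior, relative to $E \in \K^n_n$, is strictly positive — this follows, for instance, from monotonicity of mixed volumes together with $\W_i(rE;E) = r^{n-i}\vol(E) > 0$ for a small ball $rE \subseteq K_\lambda$. Hence $t \mapsto t^{1/(n-i)}$ is differentiable at $t = \W_i(\lambda)$, with derivative $\tfrac{1}{n-i} t^{(1-(n-i))/(n-i)} = \tfrac{1}{n-i}\,\W_i(\lambda)^{(1-n+i)/(n-i)}$.

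Finally, applying the chain rule to the composition $\lambda \mapsto \W_i(\lambda)^{1/(n-i)}$,
\begin{equation*}
\frac{\dlat}{\dlat\lambda}\left(\W_i(\lambda)^{\frac{1}{n-i}}\right)
= \frac{1}{n-i}\,\W_i(\lambda)^{\frac{1-n+i}{n-i}} \cdot \frac{\dlat}{\dlat\lambda}\W_i(\lambda)
= \frac{1}{n-i}\,\W_i(\lambda)^{\frac{1-n+i}{n-i}} \cdot (n-i)\W_{i+1}(\lambda)
= \W_i(\lambda)^{\frac{1-n+i}{n-i}}\W_{i+1}(\lambda),
\end{equation*}
which is \eqref{e:der Wi^1/n-i}. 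For $i=0$ this recovers Lemma~\ref{l:der vol^1/n} (using $n\W_1(\lambda)=\Sa(K_\lambda;E)$), and the case $i=0$ needs no class hypothesis since $\r_0 = \K^n$ by Proposition~\ref{p:  diff vol}. The only point requiring genuine (if standard) justification is the strict positivity of $\W_i(\lambda)$, which guarantees the outer power function is differentiable at the relevant point; everything else is the chain rule applied to the differentiability built into the definition of $\r_i$. I expect no real obstacle here — this is a short computational lemma whose content is entirely front-loaded into Definition~\ref{d:r_p} and \eqref{e:concav Wi}.
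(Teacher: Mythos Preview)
Your proof is correct and follows the same approach as the paper, which simply states that the result ``follows immediately from the definition of the class $\r_i$ and the standard rules of differentiation.'' You have merely spelled out the chain rule computation in more detail and added the (helpful but routine) observation that $\W_i(\lambda)>0$ to justify differentiability of the outer power function.
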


\begin{proof}
The result follows immediately from the definition of the class $\r_i$ and the standard rules of differentiation.
\end{proof}

\begin{prop}\label{p:i,i+1 quot}

Let $0\leq i\leq n-2$, $K,E \in \mathcal{K}_n^n$ and $K \in \r_i$.
Then the function
\[
\I_i(\lambda)=\frac{\W_{i+1}(K_{\lambda};E)^{n-i}}{\W_i(K_{\lambda};E)^{n-i-1}}
\]
is monotonically decreasing on $(-\inr(K;E),\infty)$.

Moreover, if $E$ is smooth, the following are equivalent for all $-\inr(K;E) < \lambda_0 < \lambda_1 < \infty$:
\begin{itemize}\itemsep5pt
\item[(i)]
$\I_i(\lambda_0)=\I_i(\lambda_1)$,
\item[(ii)]
$K_{\lambda_0}$ is homothetic to $K_{\lambda_1}$,
\item[(iii)]
$K_{\lambda_1}$ is homothetic to an $(n-i-1)$-tangential body of $E$,
\item[(iv)]
$\I_i(\lambda)$ is constant on $(-\inr(K;E),\lambda_1]$.
\end{itemize}

If $E$ is smooth and $\lambda_1 > 0$, conditions (i)-(iv) are satisfied if and only if $K$ is homothetic to $E$ and, consequently, if and only if $\I_i(\lambda)=\vol(E)$ for all $\lambda \in (-\inr(K;E),\infty)$.
\end{prop}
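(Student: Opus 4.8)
The plan is to mimic the proof of Theorem~\ref{thm:overline{I}} almost verbatim, replacing the role of $\vol(\lambda)^{1/n}=\W_0(\lambda)^{1/n}$ by $\W_i(\lambda)^{1/(n-i)}$ and invoking Lemma~\ref{l:der Wi^1/n-i} in place of Lemma~\ref{l:der vol^1/n}. The hypothesis $K\in\r_i$ is exactly what makes this substitution legal: it guarantees both that $\frac{\dlat}{\dlat\lambda}\W_i(\lambda)$ exists in the classical (two-sided) sense and that it equals $(n-i)\W_{i+1}(\lambda)$, so that $\frac{\dlat}{\dlat\lambda}\bigl(\W_i(\lambda)^{1/(n-i)}\bigr)=\W_i(\lambda)^{(1-n+i)/(n-i)}\W_{i+1}(\lambda)$ by Lemma~\ref{l:der Wi^1/n-i}. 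Then $\I_i(\lambda)=\bigl((n-i)\frac{\dlat}{\dlat\lambda}\W_i(\lambda)^{1/(n-i)}\bigr)^{n-i}$, so monotonicity of $\I_i$ will follow once I know that $\lambda\mapsto\W_i(\lambda)^{1/(n-i)}$ is concave, hence has a decreasing derivative. Concavity of $\lambda\mapsto\W_i(\lambda)^{1/(n-i)}$ is delivered by Theorem~\ref{gral BM th} (the general Brunn--Minkowski inequality \eqref{BM ineq quermass}) together with the concavity of the family of parallel bodies in Lemma~\ref{fact:parallel_concave}, exactly as in the $i=0$ case.

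For the equivalence (i)--(iv), under the assumption that $E$ is smooth (so the full equality characterization in Theorem~\ref{gral BM th} is available for $0\le i\le n-2$), I would argue as follows. (i)$\Rightarrow$(ii): if $\I_i(\lambda_0)=\I_i(\lambda_1)$, then by the monotonicity just proven $\I_i$ is constant on $[\lambda_0,\lambda_1]$, so $\W_i(\lambda)^{1/(n-i)}$ is affine there, which forces equality in \eqref{BM ineq quermass} for all $\mu\in[0,1]$ with $K=K_{\lambda_0}$, $L=K_{\lambda_1}$; smoothness of $E$ then yields that $K_{\lambda_0}$ and $K_{\lambda_1}$ are homothetic. (ii)$\Rightarrow$(iii): by Lemma~\ref{lem:  in of out}, $K_{\lambda_0}=K_{\lambda_1}\sim|\lambda_1-\lambda_0|E$ is an inner parallel body of $K_{\lambda_1}$ homothetic to $K_{\lambda_1}$; here the classical Theorem~\ref{th:  Sch tang} only gives that $K_{\lambda_1}$ is homothetic to a tangential body of $E$, so to reach the sharper statement "$(n-i-1)$-tangential body" I need the refinement of Theorem~\ref{th:  Sch tang} that records which quermassintegrals are preserved — I expect this to be the one genuinely new ingredient, presumably a version of \cite[Lemma 3.1.14]{Sch} phrased with $p$-tangential bodies (a body $T\supseteq E$ is a $p$-tangential body of $E$ if every supporting hyperplane of $T$ not supporting $E$ touches $T$ in points of dimension $<p$; these are precisely the bodies for which $\W_{n-p-1}(T;E)=\cdots=\W_n(T;E)$, and the equality case in \eqref{BM ineq quermass} for index $i$ with smooth $E$ pins down $p=n-i-1$). (iii)$\Rightarrow$(iv): for $\lambda\in(-\inr(K;E),\lambda_1)$, write $K_\lambda=K_{\lambda_1}\sim|\lambda-\lambda_1|E$ via Lemma~\ref{lem:  in of out} and apply the $p$-tangential version of Theorem~\ref{th:  Sch tang} to conclude $K_\lambda$ is homothetic to $K_{\lambda_1}$, whence $\I_i(\lambda)=\I_i(\lambda_1)$ by homothety invariance of $\I_i$ (which holds since $\W_j(\cdot;E)$ is homogeneous of degree $n-j$). (iv)$\Rightarrow$(i) is trivial.

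For the last claim, suppose $E$ is smooth and $\lambda_1>0$. From (iv) we get $\I_i(0)=\I_i(\lambda_1)$, so by (ii) $K_0=K$ is homothetic to $K_{\lambda_1}=K+\lambda_1 E$; Lemma~\ref{lem:  K+E K} then forces $K$ homothetic to $\lambda_1 E$, hence to $E$. Conversely, if $K$ is homothetic to $E$, homothety invariance lets me assume $K=E$, so $K_\lambda=(1+\lambda)E$ and $\I_i(\lambda)=\I_i(E;E)$ for all $\lambda>-1$; finally $\W_i(E;E)=\W_{i+1}(E;E)=\vol(E)$ gives $\I_i(E;E)=\vol(E)$.

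\medskip

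\noindent\textbf{Main obstacle.} The routine part is the monotonicity and the skeleton of the equivalence, which transfer mechanically from Theorem~\ref{thm:overline{I}}. The delicate point is step (ii)$\Rightarrow$(iii): promoting "tangential body" to "$(n-i-1)$-tangential body" requires the correct quermassintegral-sensitive refinement of the Schneider homothety lemma, i.e.\ identifying which class of tangential bodies arises from the equality case of \eqref{BM ineq quermass} at index $i$ (and this is precisely where smoothness of $E$ is unavoidable, since only then does Theorem~\ref{gral BM th} characterize equality for $i\ge 1$). I would isolate this as a separate lemma — that $K_{\lambda}$ homothetic to an inner parallel body $K_{\lambda}=L\sim\nobreak cE$ of $L$ (with $L,E\in\K_n^n$, $E$ smooth) and $\W_i(L\sim cE;E)^{1/(n-i)}$ affine in $c$ on a neighborhood of $0$ implies $L$ is an $(n-i-1)$-tangential body of $E$ — and prove it by combining the equality case of Aleksandrov--Fenchel with the known description of $p$-tangential bodies via coinciding quermassintegrals \cite[Theorem 7.6.17 and Section~3.1]{Sch}.
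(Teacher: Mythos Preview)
Your monotonicity argument and the implications (i)$\Rightarrow$(ii), (iv)$\Rightarrow$(i), and the final claim match the paper. The gap is in (ii)$\Rightarrow$(iii), which you correctly flag as the delicate step but then attack from the wrong angle. You propose to extract ``$(n-i-1)$-tangential'' from the equality case of \eqref{BM ineq quermass} at index $i$, isolating this as a separate lemma. This cannot work: if $T$ is \emph{any} tangential body of $E$, its inner parallel bodies are $T_\mu=(1+\mu)T$ up to translation for $\mu\in(-1,0]$, so $\W_i(T_\mu;E)^{1/(n-i)}=(1+\mu)\,\W_i(T;E)^{1/(n-i)}$ is affine for \emph{every} index $i$, and equality in \eqref{BM ineq quermass} holds throughout --- regardless of whether $T$ is $(n-i-1)$-tangential or merely $(n-1)$-tangential. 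Thus the Brunn--Minkowski equality at index $i$ (for smooth $E$) only recovers that $K_{\lambda_0}$ and $K_{\lambda_1}$ are homothetic, which is (ii) again; it carries no information about $p$-tangentiality beyond what Theorem~\ref{th: Sch tang} already delivers.

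The paper instead uses the standing hypothesis $K\in\r_i$ a second time. After Theorem~\ref{th: Sch tang} produces a tangential body, the upgrade comes from Remark~\ref{rem: tang body}(iii) (i.e.\ \cite[Theorem~1.3]{HCS}): a tangential body of $E$ lies in $\r_i$ if and only if it is an $(n-i-1)$-tangential body of $E$. Since $K\in\r_i$ forces $K_{\lambda_1}\in\r_i$ (the inner parallel bodies of $K_{\lambda_1}$ agree with those of $K$ by Lemma~\ref{lem: in of out}, while outer ones always satisfy the $\r_i$ identity via \eqref{e:steinerW_i}), and $\r_i$ is invariant under homotheties, the tangential body in question is automatically $(n-i-1)$-tangential. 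With this in hand the whole equivalence reduces to the proof of Theorem~\ref{thm:overline{I}} verbatim. In particular, (iii)$\Rightarrow$(iv) needs no refined Schneider lemma either: an $(n-i-1)$-tangential body is a fortiori a tangential body (Remark~\ref{rem: tang body}(i)), so the ordinary Theorem~\ref{th: Sch tang} already gives $K_\lambda$ homothetic to $K_{\lambda_1}$.
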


The case $i=n-1$ is excluded, since $\I_{n-1}(\lambda)=
\W_n(K_\lambda;E)=\vol(E)$ is constant.

Before dealing with the proof, we introduce a refined definition of tangential bodies (of a fixed convex body), for which we need the notion of $p$-extreme supporting hyperplanes, $0\leq p\leq n-1$. Given a convex body $K\in\K^n_n$, a supporting hyperplane $H_{u,h_K(u)}$ of $K$ is called $(n-p-1)$-extreme \cite[p.\ 85]{Sch} if its outer normal vector $u \in \mathbb{R}^n \setminus \{0\}$ cannot be represented as a sum of $n-p+1$ linearly independent outer normal vectors at the same boundary point of $K$. 

\begin{defn}[{\cite[p.\ 86]{Sch}}]\label{def: tang body}
Let $K,E \in \mathcal{K}_n^n$ be such that $E \subseteq K$, and let $p \in \{0,\ldots,n-1\}$. Then $K$ is a {\em $p$-tangential body of $E$} if each $(n-p-1)$-extreme supporting hyperplane of $K$ is a supporting hyperplane of $E$.
\end{defn}

We observe that $K$ is a tangential body of $E$ in the sense of Definition \ref{d: tang body} if and only if $K$ is just an $(n-1)$-tangential body of $E$ \cite[pp.\ 86, 149]{Sch}.

\begin{rem}\label{rem: tang body}
Let $E \in \K_n^n$.
\begin{itemize}\itemsep5pt
\item[(i)]
Every $p$-tangential body of $E$ is also a $q$-tangential body of $E$ whenever $0 \le p<q \le n-1$. 
\item[(ii)] The only $0$-tangential body of $E$ is $E$ itself. 
\item[(iii)]
A tangential body $K$ of $E$ belongs to the class $\r_p$ if and only if $K$ is an $(n-p-1)$-tangential body of $E$ \cite[Theorem~1.3]{HCS}.
\end{itemize}
\end{rem}

\begin{proof}[Proof of Proposition \ref{p:i,i+1 quot}]

Let $0\leq i\leq n-2$,  and let $K\in\mathcal{K}_n^n$ lie in the class $\r_i$.
By Lemma~\ref{l:der Wi^1/n-i} and the assumption of $K\in\r_i$, the derivative $\frac{\dlat}{\dlat \lambda} \left(\W_i(\lambda)^{\frac{1}{n-i}}\right)$ exists and satisfies \eqref{e:der Wi^1/n-i}.
Theorem~\ref{gral BM th} and Lemma~\ref{fact:parallel_concave} ensure that $\lambda \mapsto \W_i(\lambda)^{\frac{1}{n-i}}$ defines a concave and differentiable function on $(-\inr(K;E),\infty)$. Thus, the derivative $\frac{\dlat}{\dlat \lambda} \left(\W_i(\lambda)^{\frac{1}{n-i}}\right)$ is monotonically decreasing (see e.g.\ \cite{Royden}).
Hence, by \eqref{e:der Wi^1/n-i}, $\I_i(\lambda)=\left(\frac{\dlat}{\dlat \lambda} \left(\W_i(\lambda)^{\frac{1}{n-i}}\right)\right)^{n-i}$ decreases, too.

We notice that the statements (i)-(iv) are the exact analogues of the statements (i)-(iv) in Theorem \ref{thm:overline{I}}, with the only addendum of $K$ being an $(n-i-1)$-tangential body of $E$ instead of just a tangential body of $E$. However, these two assertions are equivalent for bodies $K \in \r_i$ by Remark~\ref{rem:  tang body}. Hence the proof of the equivalences of (i)-(iv) can follow the respective steps of the proof of Theorem~\ref{thm:overline{I}}. 

For the last part of the proof, we can proceed in analogy to the proof of Theorem~\ref{thm:overline{I}}, too. Indeed, note that from the homogeneity of quermassintegrals, once obtained $K_\lambda=(1+\lambda)E$, it follows that $\I_i(\lambda)=\vol(E)$ by using that $\W_j(E;E)=\vol(E)$ for any $0\leq j\leq n$.
\end{proof} 

We observe that for $i=0$, we recover Theorem \ref{thm:overline{I}} (notice the multiplicative constant $n$ from \eqref{e: rel Sa}), since the class $\r_0$ consists of all convex bodies. 

Now we obtain the announced improvement of \cite[Proposition 4.3]{HCSq} (i.e., of Proposition~\ref{prop:  HCSq}) as a corollary.

\begin{prop}\label{prop:  newHCSq}
Let $K,E \in \mathcal{K}_n^n$, let $0 \le i < j < n$, and suppose that $K$ belongs to the class $\r_{j-1}$. Then the function
\[
\I_{i,j}(\lambda)= \frac{\W_j(K_\lambda;E)^{n-i}}{\W_i(K_\lambda;E)^{n-j}}
\]
is monotonically decreasing on $(-\inr(K;E),\infty)$.

Moreover, if $E$ is smooth, the following are equivalent for all $-\inr(K;E) < \lambda_0 < \lambda_1 <\infty$:
\begin{itemize}\itemsep5pt
\item[(i)]
$\I_{i,j}(\lambda_0)=\I_{i,j}(\lambda_1)$,
\item[(ii)]
$K_{\lambda_0}$ is homothetic to $K_{\lambda_1}$,
\item[(iii)]
$K_{\lambda_1}$ is homothetic to an $(n-j)$-tangential body of $E$,
\item[(iv)]
$\I_{i,j}(\lambda)$ is constant on $(-\inr(K;E),\lambda_1]$.
\end{itemize}

If $E$ is smooth and $\lambda_1 > 0$, conditions (i)-(iv) are satisfied if and only if $K$ is homothetic to $E$ and, consequently, if and only if $\I_{i,j}(\lambda)=\vol(E)^{j-i}$ for all $\lambda \in (-\inr(K;E),\infty)$.
\end{prop}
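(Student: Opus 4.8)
The plan is to derive this from Proposition~\ref{p:i,i+1 quot} by a telescoping/multiplicativity argument, exactly in the spirit of how \cite[Proposition 4.3]{HCSq} reduces the general quotient to consecutive ones. First I would observe that, writing $a_k(\lambda)=\W_k(\lambda)^{1/(n-k)}$ for $k=i,i+1,\ldots,j$, Proposition~\ref{p:i,i+1 quot} (applicable since $K\in\r_{j-1}\subseteq\r_k$ for every $i\le k\le j-1$ by \eqref{e: content classes ri}) says that each function $\lambda\mapsto a_{k+1}(\lambda)^{\,n-k}/a_k(\lambda)^{\,n-k}=\left(a_{k+1}(\lambda)/a_k(\lambda)\right)^{n-k}$ is nonincreasing, hence each ratio $a_{k+1}(\lambda)/a_k(\lambda)$ is nonincreasing and positive. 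The key algebraic identity is that $\I_{i,j}(\lambda)$ is, up to a fixed positive exponent, a product of such consecutive ratios: raising $\I_{i,j}(\lambda)=\W_j(\lambda)^{n-i}/\W_i(\lambda)^{n-j}$ to the power $\tfrac{1}{(n-i)(n-j)}$ gives
\[
\I_{i,j}(\lambda)^{\frac{1}{(n-i)(n-j)}}
=\frac{\W_j(\lambda)^{\frac{1}{n-j}}}{\W_i(\lambda)^{\frac{1}{n-i}}}
=\frac{a_j(\lambda)}{a_i(\lambda)}
=\prod_{k=i}^{j-1}\frac{a_{k+1}(\lambda)}{a_k(\lambda)}.
\]
A product of nonincreasing positive functions is nonincreasing, so $\I_{i,j}(\lambda)$ is monotonically decreasing on $(-\inr(K;E),\infty)$, proving the first assertion.

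For the equivalence of (i)--(iv) under the smoothness assumption on $E$, I would argue as follows. The implications (iv)$\Rightarrow$(i) and (ii)$\Rightarrow$(i) are immediate (the latter by homothety-invariance of each $\W_k(\cdot;E)$ up to the correct power). For (i)$\Rightarrow$(ii): if $\I_{i,j}(\lambda_0)=\I_{i,j}(\lambda_1)$ then, by the monotonicity just established and the factorization above, $\I_{i,j}$ is constant on $[\lambda_0,\lambda_1]$, and since each factor $a_{k+1}/a_k$ is nonincreasing while their product is constant, every factor is constant on $[\lambda_0,\lambda_1]$; in particular $\left(a_{i+1}/a_i\right)^{n-i}=\I_i(\lambda)$ is constant on $[\lambda_0,\lambda_1]$, so by the equality case (i)$\Rightarrow$(ii) of Proposition~\ref{p:i,i+1 quot} (here the smoothness of $E$ is used) we get $K_{\lambda_0}$ homothetic to $K_{\lambda_1}$. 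Then (ii)$\Rightarrow$(iii): by Lemma~\ref{lem: in of out}, $K_{\lambda_0}=K_{\lambda_1}\sim|\lambda_1-\lambda_0|E$ is an inner parallel body of $K_{\lambda_1}$ homothetic to $K_{\lambda_1}$, so Theorem~\ref{th: Sch tang} makes $K_{\lambda_1}$ homothetic to a tangential body of $E$; to upgrade "tangential body" to "$(n-j)$-tangential body" I would invoke Remark~\ref{rem: tang body}(iii), since $K\in\r_{j-1}$ and hence $K_{\lambda_1}\in\r_{j-1}$ forces the tangential body to be $(n-(j-1)-1)=(n-j)$-tangential. Finally (iii)$\Rightarrow$(iv): if $K_{\lambda_1}$ is homothetic to an $(n-j)$-tangential body of $E$, then for any $\lambda\in(-\inr(K;E),\lambda_1)$, $K_\lambda=K_{\lambda_1}\sim|\lambda-\lambda_1|E$ is by Lemma~\ref{lem: in of out} an inner parallel body, so Theorem~\ref{th: Sch tang} gives $K_\lambda$ homothetic to $K_{\lambda_1}$, whence $\I_{i,j}(\lambda)=\I_{i,j}(\lambda_1)$.

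For the last claim, assume $E$ smooth, $\lambda_1>0$, and that (i)--(iv) hold. By (iv), $\I_{i,j}(0)=\I_{i,j}(\lambda_1)$ (as $-\inr(K;E)<0<\lambda_1$), so by (ii) $K_0=K$ is homothetic to $K_{\lambda_1}=K+\lambda_1 E$; Lemma~\ref{lem:  K+E K} then yields that $K$ is homothetic to $\lambda_1 E$, hence to $E$. Conversely, if $K$ is homothetic to $E$, then (using homothety-invariance) we may take $K=E$, so $K_\lambda=(1+\lambda)E$ for $\lambda>-1$, and from $\W_k(E;E)=\vol(E)$ for $0\le k\le n$ together with homogeneity $\W_k((1+\lambda)E;E)=(1+\lambda)^{n-k}\vol(E)$ one computes directly that $\I_{i,j}(\lambda)=\frac{\left((1+\lambda)^{n-j}\vol(E)\right)^{n-i}}{\left((1+\lambda)^{n-i}\vol(E)\right)^{n-j}}=\vol(E)^{(n-i)-(n-j)}=\vol(E)^{j-i}$ for all $\lambda>-\inr(K;E)$.

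I expect the main obstacle to be the bookkeeping in the equality-characterization chain — specifically ensuring that the smoothness-dependent equality case of Proposition~\ref{p:i,i+1 quot} is only ever applied for the index $i$ (or for all indices $k\in\{i,\ldots,j-1\}$, which also works and may be cleaner), and correctly matching the tangential-body order $(n-j)$ coming out of Remark~\ref{rem: tang body}(iii) with the class assumption $K\in\r_{j-1}$. The monotonicity part itself is entirely routine once the product factorization is written down.
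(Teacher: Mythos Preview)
Your proposal is correct and follows essentially the same route as the paper: factor $\I_{i,j}$ as a product of the consecutive quotients $\I_k$ for $k=i,\ldots,j-1$ (each applicable since $K\in\r_{j-1}\subseteq\r_k$), deduce monotonicity from that of each factor, and obtain the equality characterizations from those of Proposition~\ref{p:i,i+1 quot}. The paper is slightly more direct in that it invokes the equivalences of Proposition~\ref{p:i,i+1 quot} wholesale (so the $(n-j)$-tangential body in (iii) drops out of the case $k=j-1$ without further work), whereas you re-derive the implications (ii)$\Rightarrow$(iii)$\Rightarrow$(iv) by hand; your step ``$K\in\r_{j-1}$ hence $K_{\lambda_1}\in\r_{j-1}$'' is true but deserves a one-line justification via Lemma~\ref{lem: in of out}.
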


\begin{proof}
Let  $K,E\in \mathcal{K}_n^n$, let $0 \le i < j < n$, and let $K$ belong to the class $\r_{j-1}$.
Since $K \in \r_{j-1}$, we know from \eqref{e: content classes ri}, that $K \in \r_{k}$ for $k=i,\ldots,j-1$. Proposition~\ref{p:i,i+1 quot} shows that all the functions $\I_k(\lambda)$, $k=i,\ldots,j-1$, decrease. Then
\[
\prod_{k=i}^{j-1} \I_k(\lambda)^{\frac{(n-i)(n-j)}{(n-k-1)(n-k)}}= \frac{\W_j(K_\lambda;E)^{n-i}}{\W_i(K_\lambda;E)^{n-j}}=\I_{i,j}(\lambda)
\]
decreases as well.

Moreover, we obtain $\I_{i,j}(\lambda_0)=\I_{i,j}(\lambda_1)$ for some $-\inr(K;E) < \lambda_0 < \lambda_1$ if and only if $\I_k(\lambda_0)=\I_k(\lambda_1)$ for $k=i,\ldots,j-1$. The characterizations of the last assertions, given in Proposition~\ref{p:i,i+1 quot}, yield the remainder of Proposition~\ref{prop:  newHCSq}.
\end{proof}

The following result is now obtained in the same way as Corollary~\ref{cor:  isop optimal} was proven using Theorem \ref{thm:  Wulff}.

\begin{cor}\label{cor:  new isop opt}
Let $0 \le i < j < n$ and let $E \in \K_n^n$ be smooth. Then a convex body $\tilde{K}\in \r_{j-1} \cap \K_n^n$ is a minimizer of the quotient $\frac{\W_j(K;E)^{n-i}}{\W_i(K;E)^{n-j}}$ among all convex bodies $K\in \r_{j-1} \cap \K_n^n$ if and only if $\tilde{K}$ is homothetic to $E$.
\end{cor}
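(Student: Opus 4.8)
The plan is to mirror the proof of Corollary~\ref{cor:  isop optimal}, with Proposition~\ref{prop:  newHCSq} taking over the role of Theorem~\ref{thm:  Wulff} and the asymptotics of $\I_{i,j}(\lambda)$ as $\lambda\to\infty$ taking over the role of Proposition~\ref{prop:  classes}. So fix $0\le i<j<n$, a smooth body $E\in\mathcal{K}_n^n$, and an arbitrary $K\in\r_{j-1}\cap\mathcal{K}_n^n$. Since $K\in\mathcal{K}_n^n$, the quermassintegrals $\W_i(K;E),\W_j(K;E)$ are positive and the quotient is well defined. By Proposition~\ref{prop:  newHCSq}, the function $\I_{i,j}(\lambda)=\W_j(K_\lambda;E)^{n-i}/\W_i(K_\lambda;E)^{n-j}$ is monotonically decreasing on $(-\inr(K;E),\infty)$.

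The first step is to compute $\lim_{\lambda\to\infty}\I_{i,j}(\lambda)$. For $\lambda\ge 0$ one has $K_\lambda=K+\lambda E$, so by the Steiner-type formula \eqref{e:steinerW_i} the map $\lambda\mapsto\W_k(K_\lambda;E)$ is a polynomial of degree $n-k$ with leading coefficient $\W_n(K;E)=\vol(E)$; hence $\lambda^{-(n-k)}\W_k(K_\lambda;E)\to\vol(E)$. (Alternatively, this follows from the Hausdorff convergence $\tfrac1\lambda K_\lambda\to E$ of Proposition~\ref{prop:  classes} together with the homogeneity $\W_k(\mu L;E)=\mu^{n-k}\W_k(L;E)$ and the continuity of the quermassintegrals.) Since the exponents of $\lambda$ cancel in the ratio,
\[
\lim_{\lambda\to\infty}\I_{i,j}(\lambda)=\frac{\vol(E)^{n-i}}{\vol(E)^{n-j}}=\vol(E)^{j-i}.
\]
Monotonicity of $\I_{i,j}$ then yields, for every $K\in\r_{j-1}\cap\mathcal{K}_n^n$,
\[
\frac{\W_j(K;E)^{n-i}}{\W_i(K;E)^{n-j}}=\I_{i,j}(0)\ \ge\ \lim_{\lambda\to\infty}\I_{i,j}(\lambda)=\vol(E)^{j-i}.
\]
Moreover $E$ itself is an admissible competitor: $E$ is trivially a tangential body of itself, indeed its $0$-tangential body, hence lies in every class $\r_p$ by Remark~\ref{rem: tang body}, in particular in $\r_{j-1}$; and since $\W_k(E;E)=\vol(E)$ for all $k$, the body $E$ attains the value $\vol(E)^{j-i}$. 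By the homothety invariance of the quotient (a consequence of the homogeneity and translation invariance of quermassintegrals) every homothet of $E$ is a minimizer as well. It remains to rule out other minimizers.

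So let $\tilde K\in\r_{j-1}\cap\mathcal{K}_n^n$ be a minimizer, i.e.\ $\I_{i,j}(0)=\vol(E)^{j-i}$ for $\tilde K$. Combined with the displayed chain and the monotonicity of $\I_{i,j}$, this forces $\I_{i,j}$ to be constant on $[0,\infty)$; in particular $\I_{i,j}(0)=\I_{i,j}(1)$, which is condition (i) of Proposition~\ref{prop:  newHCSq} with $\lambda_0=0<1=\lambda_1$ (here $0>-\inr(\tilde K;E)$ since $\tilde K\in\mathcal{K}_n^n$). Since $\lambda_1=1>0$, the final assertion of Proposition~\ref{prop:  newHCSq} — and this is the point where the smoothness of $E$ enters — gives that $\tilde K$ is homothetic to $E$. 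The reverse implication was already observed. I do not expect a genuine obstacle; the argument is essentially bookkeeping on top of Proposition~\ref{prop:  newHCSq}. The two points deserving care are (a) verifying that $E$ belongs to the competition class $\r_{j-1}$, so that the infimum $\vol(E)^{j-i}$ is actually attained there, and (b) recalling that identifying the minimizers (rather than merely the value of the infimum) relies on the smoothness hypothesis on $E$, exactly as in Proposition~\ref{prop:  newHCSq}.
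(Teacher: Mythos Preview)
Your argument is correct and follows essentially the same route the paper indicates: use the monotonicity and equality characterization of Proposition~\ref{prop:  newHCSq} together with the limit $\lim_{\lambda\to\infty}\I_{i,j}(\lambda)=\vol(E)^{j-i}$, exactly paralleling how Corollary~\ref{cor:  isop optimal} was derived from Theorem~\ref{thm:  Wulff} and Proposition~\ref{prop:  classes}. You supply more detail than the paper (in particular the verification that $E\in\r_{j-1}$ and the explicit computation of the limit via \eqref{e:steinerW_i}), but the underlying strategy is the same.
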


The assumption $K \in \r_{j-1}$ in Proposition~\ref{prop:  newHCSq} and Corollary~\ref{cor:  new isop opt}, respectively, is essential in our proof. However, so far we do not have
an example of a body $K \in \K_n^n \setminus \r_{j-1}$ that does not satisfy the claims of Proposition~\ref{prop:  newHCSq} or Corollary~\ref{cor:  new isop opt}.

\begin{rem}
Theorem~\ref{thm:overline{I}} on the family $(K_\lambda)_{\lambda > -\inr(K;E)}$ of parallel bodies gave rise to the analogous Theorem~\ref{thm:  Wulff} on the families $(K(\Omega,\lambda))_{\lambda > -\inr(K;E)}$, since the last families could be interpreted as inner parallel bodies by Lemma~\ref{lem:  Wulff}. In a similar way, results of the present section imply analogues concerning the families  $(K(\Omega,\lambda))_{\lambda > -\inr(K;E)}$. Then conditions $K \in \r_i$ have to be replaced by $K(\Omega,\Lambda) \in \r_i$ for all $\Lambda>-\inr(K;E)$.
\end{rem}



\end{document}